\newcommand{\newcom}{\newcommand}
\newcom{\al}{\alpha}
\newcom{\be}{\beta}
\newcom{\eps}{\epsilon}
\newcom{\veps}{\varepsilon}
\newcom{\ga}{\gamma}
\newcom{\Ga}{\Gamma}
\newcom{\ka}{\kappa}
\newcom{\Lam}{\Lambda}
\newcom{\lam}{\lambda}
\newcom{\Om}{\Omega}
\newcom{\om}{\omega}
\newcom{\Si}{\Sigma}
\newcom{\si}{\sigma}
\newcom{\tht}{\theta}
\newcom{\dtri}{\nabla}
\newcom{\tri}{\triangle}
\newcom{\oo}{\infty}
\newcom{\vphi}{\varphi}
\newcom{\cB}{{\mathcal B}}
\newcom{\cC}{{\mathcal C}}
\newcom{\cD}{{\mathcal D}}
\newcom{\cF}{{\mathcal F}}
\newcom{\cL}{{\mathcal L}}
\newcom{\cM}{{\mathcal M}}
\newcom{\cP}{{\mathcal P}}
\newcom{\cS}{{\mathcal S}}
\newcom{\cQ}{{\mathcal Q}}
\newcom{\cT}{{\mathcal T}}
\newcom{\cY}{{\mathcal Y}}
\newcom{\cZ}{{\mathcal Z}}
\newcom{\R}{\mathbb R}
\newcom{\T}{\mathbb T}
\newcom{\N}{\mathbb N}
\newcom{\Z}{\mathbb Z}
\newcom{\C}{\mathbb C}
\newcom{\E}{\mathbb E}
\newcommand{\De}{\Delta}
\newcom{\f}{\frac}
\newcom{\di}{\displaystyle\int}
\newcom{\ds}{\displaystyle\sum}
\newcom{\dl}{\displaystyle\lim}
\newcom{\ov}{\overline}
\newcom{\sset}{\subset}
\newcom{\wt}{\widetilde}
\newcom{\pa}{\partial}
\newcom{\p}{\partial}
\newcom\na{\nabla}
\newcom{\suml}{\sum\limits}
\newcom{\supl}{\sup\limits}
\newcom{\intl}{\int\limits}
\newcom{\infl}{\inf\limits}
\newcom{\disp}{\displaystyle}
\newcom{\non}{\nonumber}
\newcom{\no}{\noindent}
\newcom{\QED}{$\square$}
\newtheorem{athm}{\bf \t}[section]
\newenvironment{thm} [1] {\def\t{#1}\begin{athm} \bf \rm} {\end{athm}}
\newcom{\bthm}{\begin{thm}}
\newcom{\ethm}{\end{thm}}
\newtheorem{theorem}{Theorem}[section]
\newtheorem{lemma}{Lemma}[section]
\newtheorem{remark}{Remark}[section]
\newcom{\beq}{\begin{equation}}
\newcom{\eeq}{\end{equation}}
\newcom{\ben}{\begin{eqnarray}}
\newcom{\een}{\end{eqnarray}}
\newcom{\beno}{\begin{eqnarray*}}
\newcom{\eeno}{\end{eqnarray*}}
\newcom{\bali}{\begin{aligned}}
\newcom{\eali}{\end{aligned}}
\numberwithin{equation}{section}
\begin{document}

\title[LARGE TIME BEHAVIOR OF THE NON-ISENTROPIC  NAVIER-STOKES-MAXWELL SYSTEM]
{LARGE TIME BEHAVIOR FOR THE NON-ISENTROPIC  NAVIER-STOKES-MAXWELL SYSTEM}

\author{Qingqing Liu}
\address{(QQL)
School of Mathematics, South China University of Technology,
Guangzhou, 510641, P. R. China} \email{maqqliu@scut.edu.cn}

\author{Yifan Su}
\address{(YFS)
The Hubei Key Laboratory of Mathematical Physics, School of
Mathematics and Statistics, Central China Normal University, Wuhan,
430079, P. R. China} \email{eschenbach@foxmail.com}

%\thanks{}

\date{\today}
\keywords{Asymptotic behavior of solutions; Navier-Stokes equations; Maxwell equations}

\subjclass[2010]{35B40,35Q30,35Q61}

\begin{abstract}
In this paper, we are concerned with the system of the non-isentropic compressible Navier-Stokes equations coupled with the Maxwell equations through the Lorentz force in three space dimensions. The global existence of solutions
near constant steady states is established, and the time-decay
rates of perturbed solutions are obtained. The proof for existence is due
to the classical energy method, and the investigation of large-time behavior is based on the linearized analysis
of the non-isentropic Navier-Stokes-Poisson equations and the electromagnetic part for the linearized isentropic Navier-Stokes-Maxwell equations.  In the meantime, the time-decay rates obtained by Zhang, Li, and Zhu~[{\it J. Differential Equations, 250(2011), 866-891}] for the linearized non-isentropic Navier-Stokes-Poisson equations are improved.
\end{abstract}

\maketitle

%\tableofcontents

%\newpage
\section{Introduction}

% The 2-fluids Euler-Maxwell system in plasma physics describe dynamics of
%two separate compressible fluids of ions and electrons interacting
%with their self-consistent electromagnetic field. Many famous
%nonlinear dispersive PDE, such as Zakharov¡¯s equation, nonlinear
%Schr\"{o}dinger equations, as well as KdV equations, can be formally
%derived from 2-fluids Euler-Maxwell system under various asymptotic
%limits. In this paper, we consider the bipolar Euler-Maxwell system
%in three dimensional space. The system reads

The Navier-Stokes-Maxwell system is a plasma physical model
describing the motion of charged particles (ions and
electrons) in electromagnetic field \cite{Krall,Pai}. One can refer to Appendix in \cite{Renjun Duan} to
see the derivation of the Navier-Stokes-Maxwell system from the
Vlasov-Maxwell-Boltzmann system. In this paper, we
consider the following one-fluid non-isentropic compressible Navier-Stokes-Maxwell system,
\begin{eqnarray}\label{1.1}
&&\left\{\begin{aligned}
&\partial_t \rho+\nabla\cdot(\rho u)=0,\\
&\rho(\partial_t u+u \cdot \nabla u)+\nabla
P(\rho,\theta)=-\rho\left(E+u\times B\right)+\mu \Delta u+(\mu+\mu')\nabla \nabla\cdot u,\\
& \rho c_{\nu}(\partial_{t}\theta+u\cdot \nabla\theta)+\theta
P_{\theta}(\rho,\theta)\nabla\cdot u=\kappa \Delta
\theta+\frac{\mu}{2}|\nabla u+(\nabla
u)'|^{2}+\mu'(\nabla \cdot u)^{2},\\
&\partial_t E-\nabla\times B=\rho u,\\
&\partial_t B+\nabla \times E=0,\\
&\nabla \cdot E=1-\rho, \ \ \nabla \cdot B=0.
\end{aligned}\right.
\end{eqnarray}\\
Here, $\rho =\rho(t,x)\geq 0$ is the electron density, $u=u(t,x)\in \mathbb{R}^{3}$ is the electron velocity, $\theta=\theta(t,x)$ is the absolute temperature, $E=E(t,x)\in \mathbb{R}^{3}$, $B=B(t,x)\in \mathbb{R}^{3}$ for $t>0,\ x\in \mathbb{R}^{3}$, represent electronic and magnetic fields respectively. The pressure function $P(\cdot)\in \mathbb{R}_{+}$ depending  on $\rho$ and $\theta$ is smooth. Initial data are given as
\begin{equation}\label{1.2}
[\rho,u,\theta, E,B]|_{t=0}=[\rho_{0},u_{0},\theta_{0},E_{0},B_{0}],\ x\in \mathbb{R}^{3},
\end{equation}
with the compatible conditions
\begin{equation}\label{1.3}
\nabla\cdot E_{0}=1-\rho_{0},\ \nabla\cdot B_{0}=0,
\end{equation}
which is crucial in our paper to improve the asymptotic results on Navier-Stokes-Poisson equations in \cite{ZLZ}. The constant viscous coefficients $\mu$ and $\mu'$ satisfy $\mu+\frac{2}{3}\mu'>0$.

The global existence and large time behavior of this model have been studied by Duan \cite{Renjun Duan} in the isentropic case. It inspires a series of researches on this model. For instance, Hou-Yao-Zhu \cite{HYZ} established the global existence and uniqueness of strong solutions to the 1-D compressible Navier-Stokes-Maxwell system with large initial data for the initial boundary value problem. Hong-Hou-Peng-Zhu~\cite{HHPZ} investigated the global existence of spherically symmetric classical solution to the Navier-Stokes-Maxwell system with large initial data and vacuum. Meanwhile, Feng-Peng-Wang \cite{FPW} considered the full compressible Navier-Stokes-Maxwell equations where the temperature equation takes the form of
\begin{equation*}
\theta_{t}+\frac{2}{3}u\cdot \nabla \theta+u\cdot \nabla \theta+(\theta-1)+\frac{1}{3}|u|^{2}=0.
\end{equation*}
They proved the global existence and large-time behavior but without decay rate. Later, Wang-Xu \cite{WX} continued to study the full compressible Navier-Stokes-Maxwell system appeared in \cite{FPW} and obtained the time-decay rate of the global smooth solutions based on a detailed analysis to the Green's function of the linearized system and some elaborate energy estimates.

Another interesting model is Navier-Stokes-Poisson system when the magnetic field is absent. A detailed spectrum analysis of the linear semigroup in terms of the decomposition of wave modes at both lower and higher frequency can be found in \cite{LMZ}, which is important to
capture the convergence rates of solutions to the corresponding linearized equation. Later on, Wang-Wu \cite{WW} obtained the pointwise estimates of solutions to the Navier-Stokes-Poisson system based on the analysis of the Green's function. And in \cite{ZLZ}, Zhang-Li-Zhu generalized the isentropic case to the non-isentropic case with additional efforts in taking care of the temperature equation. See \cite{DL}  \cite{DY} \cite{TWW}  and reference therein for discussion and analysis of the existence results around nontrivial profile, such as stability of steady states with non-flat doping profile \cite{TWW}, stability of rarefaction waves and boundary layer for outflow on the two-fluid case \cite{DY}, stability of rarefaction waves \cite{DL}. See \cite{HLi} for the the global existence of solutions in the Besov space, See
\cite{ZT} for a study in two space dimensions and a survey by Hsiao-Li \cite{HL}.

On the other hand, there is no analysis on the global
existence and the asymptotic behavior for the full compressible Navier-Stokes-Maxwell equations derived from the Vlasov-Maxwell-Boltzmann system \cite{Renjun Duan}. In this paper, we will prove the global existence and time-decay rates of the solutions to the above Navier-Stokes-Maxwell system \eqref{1.1} around constant equilibrium  $[\rho,u,\theta,E,B]=[1,0,1,0,0]$. Our main result can be stated as follows.

\begin{theorem}\label{th1}
Assume that $N\geq 4$ and initial data $U_{0}:=[\rho_{0},u_{0},\theta_{0},E_{0},B_{0}]$ satisfies the compatible condition \eqref{1.3}. There is $ \delta_{0}>0$ such that if
\begin{eqnarray*}
\|[\rho_{0}-1,u_{0},\theta_{0}-1,E_{0},B_{0}]\|_{N} \leq \delta_{0},
\end{eqnarray*}
then, the Cauchy problem \eqref{1.1}-\eqref{1.3} of the non-isentropic Navier-Stokes-Maxwell system admits a unique
global solution  $[\rho(t,x),u(t,x),\theta(t,x),E(t,x),B(t,x)] $ with
\begin{eqnarray*}
\begin{aligned}
&[\rho(t,x)-1,u(t,x),\theta(t,x)-1,E(t,x),B(t,x)]\in
C([0,\infty);H^{N}(\mathbb{R}^{3})),\\
&\rho-1\in L^{2}([0,\infty);H^{N }(\mathbb{R}^{3})),\\
&\nabla u \in L^{2}([0,\infty);H^{N}(\mathbb{R}^{3})),\ \ \ \nabla\theta  \in L^{2}([0,\infty);H^{N}(\mathbb{R}^{3})),\\
&\nabla E\in L^{2}([0,\infty);H^{N-2}(\mathbb{R}^{3})),\ \ \ \nabla^{2} B\in L^{2}([0,\infty);H^{N-3}(\mathbb{R}^{3})),
\end{aligned}
\end{eqnarray*}
Moreover, there is $\delta_{1}>0$ such that if
\begin{eqnarray*}
\|[\rho_{0}-1,u_{0},\theta_{0}-1,E_{0},B_{0}]\|_{L^{1}\cap
H^{N+2}}\leq
\delta_{1},
\end{eqnarray*}
then, the solution $[\rho(t,x),u(t,x),\theta(t,x),E(t,x),B(t,x)] $
satisfies the following decay properties,
\begin{eqnarray}
&&\begin{aligned}
&\|\rho(t)-1\|\lesssim(1+t)^{-1},\\
&\|u(t)\|\lesssim(1+t)^{-\frac{5}{8}},\\
&\|[E(t),\theta(t)-1]\|\lesssim(1+t)^{-\frac{3}{4}}\ln(1+t),\\
&\|B(t)\|\lesssim(1+t)^{-\frac{3}{8}},\\
\end{aligned}
\end{eqnarray}
for any $t\geq 0$.
\end{theorem}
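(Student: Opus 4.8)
The plan is to separate the theorem into its two distinct assertions: global existence near the equilibrium $[1,0,1,0,0]$ via a refined energy method, and the time-decay rates via a linearized spectral analysis combined with Duhamel's principle. Throughout I write the perturbation as $[\sigma, u, \vartheta, E, B]$ with $\sigma = \rho-1$ and $\vartheta = \theta-1$.

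For the global existence I would first construct a local-in-time solution in $H^N$ by a standard iteration scheme, and then close the argument by uniform a priori estimates and a continuity argument. The heart of the matter is to build an energy functional $\mathcal{E}_N(t) \sim \|[\sigma,u,\vartheta,E,B]\|_N^2$ and a dissipation rate $\mathcal{D}_N(t)$ satisfying a differential inequality of the form $\frac{d}{dt}\mathcal{E}_N(t) + \mathcal{D}_N(t) \lesssim \sqrt{\mathcal{E}_N(t)}\,\mathcal{D}_N(t)$, which closes once $\mathcal{E}_N(0)\le\delta_0^2$ is small. The delicate point, reflected in the asymmetric regularity conclusions $\nabla E\in L^2(H^{N-2})$ and $\nabla^2 B\in L^2(H^{N-3})$, is that the Maxwell part provides only weak, derivative-losing dissipation. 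Unlike the Poisson case, where $E$ is elliptically slaved to $\sigma$ through $\nabla\cdot E = -\sigma$, here $B$ evolves hyperbolically, so dissipation for $E$ and $B$ is recovered only through the coupled structure of $\partial_t E - \nabla\times B = \rho u$ and $\partial_t B + \nabla\times E = 0$. I would generate it by constructing interaction functionals, e.g.\ estimating the time derivatives of $\langle u, E\rangle$, $\langle E, \nabla\times B\rangle$ and $\langle\nabla\sigma, u\rangle$-type quantities, and then adding a small multiple of these to the pure energy to produce the missing dissipative control of $\sigma$, $\nabla u$, $\nabla\vartheta$, $\nabla E$ and $\nabla^2 B$.

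For the decay rates I would pass to the linearized system at $[1,0,1,0,0]$ and use the Helmholtz decomposition $u = u^{\parallel}+u^{\perp}$, $E = E^{\parallel}+E^{\perp}$. The curl-free block $[\sigma, u^{\parallel}, \vartheta, E^{\parallel}]$ decouples into the linearized non-isentropic Navier-Stokes-Poisson system, with $E^{\parallel}$ slaved to $\sigma$ through the preserved constraint $\nabla\cdot E = -\sigma$; I analyze its semigroup by Fourier transform, computing the eigenvalue expansions of its symbol in the low- and high-frequency regimes to obtain $L^1$-$L^2$ decay. The divergence-free block $[u^{\perp}, E^{\perp}, B]$ is governed by the electromagnetic part of the linearized isentropic Navier-Stokes-Maxwell equations, whose slower, wave-dominated dissipation produces the weaker rate behind $\|B(t)\|\lesssim(1+t)^{-3/8}$. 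The crucial improvement over \cite{ZLZ} comes from the compatible condition: since $\sigma = -\nabla\cdot E$ is propagated, I estimate $\|\sigma\|$ through $\|\nabla E\|$, and the extra derivative (an extra factor $|\xi|$ in Fourier space) sharpens the low-frequency integral so that $\|\rho(t)-1\|\lesssim(1+t)^{-1}$ rather than the generic rate; the $\ln(1+t)$ factor in the $[E,\vartheta]$ estimate arises from a logarithmically borderline low-frequency time integral in the Duhamel formula.

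Finally I would transfer the linear estimates to the full nonlinear problem. Writing $U(t) = e^{tL}U_0 + \int_0^t e^{(t-s)L}\mathcal{N}(U(s))\,ds$, I would bound the quadratic nonlinearity $\mathcal{N}(U)$ (the Lorentz term $u\times B$, the convective terms, and the pressure and heat nonlinearities) in $L^1\cap L^2$ using the high-order energy bounds from the first part together with a time-weighted a priori decay ansatz, and then close a bootstrap argument component by component, tracking the distinct rates of $\sigma$, $u$, $[E,\vartheta]$ and $B$. The main obstacle I anticipate lies exactly in this nonlinear step: since $B$ decays only like $(1+t)^{-3/8}$, products such as $u\times B$ fed back into the electric and velocity equations decay too slowly to be absorbed naively, and one must exploit the extra derivative/cancellation from the compatibility constraint and the precise frequency-by-frequency structure of the two linear blocks to keep the bootstrap self-consistent and recover the stated rates.
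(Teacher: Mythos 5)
Your proposal follows essentially the same route as the paper: an $H^N$ energy method with small interaction functionals (of the type $\langle\partial^{\alpha}u,\partial^{\alpha}\nabla n\rangle$, $\langle\partial^{\alpha}\nabla\times u,\partial^{\alpha}\nabla\times E\rangle$, $\langle\partial^{\alpha}E,\partial^{\alpha}\nabla\times B\rangle$) to recover the degenerate, regularity-losing dissipation of $E$ and $B$; the Helmholtz splitting of the linearized system into the Navier--Stokes--Poisson fluid block and the electromagnetic block; the use of the compatibility constraint $\nabla\cdot E_{0}=-n_{0}$ to cancel the $|\xi|^{-1}$ low-frequency singularity and upgrade the rates of \cite{ZLZ}; and a Duhamel bootstrap with time-weighted functionals to close the nonlinear decay (where the borderline $u\times B$ term indeed yields the $(1+t)^{-1}$ source responsible for the logarithm in the $[E,\theta-1]$ estimate). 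No essential step is missing or misdirected.
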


One of the most important characteristics of \eqref{1.1} is  its regularity-loss property (see \cite{Renjun Duan}), meaning that the highest-order spacial derivative of the electromagnetic field is not time-space integrable. Another interesting model in plasma physics which has the same feature is the Euler-Maxwell system. One can refer to \cite{Chen,Duan,DLZ,GM,GIP,Jerome,LZ,UK,USK} for a detailed knowledge on this subject. There has been a general theory developed in \cite{UDK} in terms of the Fourier energy method to study the decay structure of general symmetric hyperbolic systems with partial relaxations of the regularity-loss type. The main feature of time-decay properties for such regularity-loss system is that solutions over the high-frequency domain can still gain the enough time-decay rate by compensating enough regularity of initial data.  On the other hand, the velocity and the temperature themselves are not time-space integrable due to the Navier-Stokes structure of \eqref{1.1}. We must be very careful to deal with the zero-order estimates.

The time decay rates stated in Theorem \ref{th1} depend essentially on the spectral analysis of the linearized system around the constant steady state. In fact, the solution to the linearized homogenous system can be written as the sum of the fluid part and the electromagnetic part in the form of
\begin{eqnarray*}
&& \left[
  \begin{array}{c}
   n(t,x)\\
   u(t,x)\\
   \sigma(t,x)\\
   E(t,x)\\
   B(t,x)\\
 \end{array}\right]
 =
 \left[
  \begin{array}{c}
   n(t,x)\\
   u_{\parallel}(t,x)\\
   \sigma(t,x)\\
   E_{\parallel}(t,x)\\
   0\\
 \end{array}\right]+
\left[
  \begin{array}{c}
   0 \\
   u_{\perp}(t,x)\\
     0\\
   E_{\perp}(t,x)\\
  B(t,x)\\
 \end{array}\right],
\end{eqnarray*}
where the two terminologies, {\it fluid part} and {\it electromagnetic part}, have been used in \cite{Duan,Renjun Duan,DuanLZ}. With the help of the above decomposition, the linearized system \eqref{Linear} of the non-isentropic Navier-Stokes-Maxwell equations can be rewritten as two subsystems, the fluid part are in accordance with the linearized Navier-Stokes-Poisson equations in \cite{ZLZ}, and the electromagnetic part coincides with that for the isentropic Navier-Stokes-Maxwell system in \cite{Renjun Duan}. This decomposition is quite useful in dealing with complex linearized system containing curl, such as  Euler-Maxwell and Navier-Stokes-Maxwell system. We can  refer to \cite{Duan,Renjun Duan,DuanLZ} for the detailed spectrum analysis with the use of a similar decomposition.

Compared with the decay rate $(1+t)^{-\frac{5}{8}}$ of the velocity for the linearized isentropic Navier-Stokes-Maxwell equations in \cite{Renjun Duan}, the velocity of the linearized Navier-Stokes-Poisson system decays at the slower time-decay rate $(1+t)^{-\frac{1}{4}}$, which is not enough to generalize the isentropic results \cite{Renjun Duan} to our non-isentropic case. A careful observation of the compatible conditions
$\nabla\cdot E_{0}=1-\rho_{0}$ implies that the singularities at lower frequency in \cite{ZLZ} can be eliminated and thus the time-decay rate $(1+t)^{-\frac{1}{4}}$ of momentum in \cite{ZLZ} can be improved to $(1+t)^{-\frac{3}{4}}$, see more details in Section \ref{sec4}. We also mention the works, Duan-Liu-Ukai-Yang in \cite{DLUY}, Duan-Ukai-Yang-Zhao in
\cite{RSYZ} for the study of
optimal convergence rates of the isentropic and non-isentropic compressible Navier-Stokes
equations with potential forces. The Navier-Stokes-Maxwell equations can be viewed as the Navier-Stokes equations with Lorentz force $E+u\times B$, where the electromagnetic field satisfies Maxwell equations. In this sense, our result is also the  generalization of the result in \cite{RSYZ} for the non-isentropic Navier-Stokes equations with external force.

\medskip
\noindent{\it Notations.} Let us introduce some notations for the use throughout this paper.
$C$ denotes some positive (generally large) constant and $ \lambda$
denotes some positive (generally small) constant, where both $C$ and
$ \lambda$ may take different values in different places. For two
quantities $a$ and $b$, $a\sim b$ means $\lambda a \leq  b \leq
\frac{1}{\lambda} a $ for a generic constant $0<\lambda<1$. For any
integer$m\geq 0$, we use $H^{m}$, $\dot{H}^{m}$ to denote the usual
Sobolev space $H^{m}(\mathbb{R}^{3})$ and the corresponding
$m$-order homogeneous Sobolev space, respectively. Set $L^{2}=H^{m}$
when $m = 0$. For simplicity, the norm of $ H^{m}$ is denoted by
$\|\cdot\|_{m} $ with $\|\cdot \|=\|\cdot\|_{0}$. We use $
\langle\cdot, \cdot \rangle$ to denote the inner product over the
Hilbert space $ L^{2}(\mathbb{R}^{3})$, i.e.
\begin{eqnarray*}
\langle f,g \rangle=\int_{\mathbb{R}^{3}} f(x)g(x)dx,\ \ \ \  f =
f(x),\ \  g = g(x)\in L^2(\mathbb{R}^{3}).
\end{eqnarray*}
 For a multi-index $\alpha =
[\alpha_1, \alpha_2, \alpha_3]$, we denote $\partial^{\alpha} =
\partial^{\alpha_{1}}_ {x_1}\partial^{\alpha_{2}}_ {x_2} \partial^{\alpha_{3}}_ {x_3} $.
The length of $ \alpha$ is $|\alpha| = \alpha_1 + \alpha_2 +
\alpha_3$. For simplicity, we also
set $\partial_{j}=\partial_{x_{j}}$ for $j = 1, 2, 3$.

%\vspace{6mm}

The rest of the paper is organized as follows. In Section \ref{sec2}, we reformulate the Cauchy problem \eqref{1.1}-\eqref{1.3} around the constant steady state. In Section \ref{sec4}, we decompose the linearized equations \eqref{Linear} into two decoupled subsystems which have been studied before in \cite{Renjun Duan,ZLZ} and present the time-decay properties for the linearized equations \eqref{Linear}. In Section \ref{sec5}, we first prove the global existence of solutions by the energy method, and then show the time decay rate of solutions around the constant states.
\vskip 2mm

\section{Reformulation of the problem}\label{sec2}
In this section, we will reformulate the Cauchy problem \eqref{1.1}-\eqref{1.3}. For that, let us set $n=\rho-1$, $\sigma=\theta-1$. Then
$U:=[n,u,\sigma,E,B]$ satisfies
\begin{equation}\label{2.5}
\left\{
  \begin{aligned}
  &\partial_t n+\nabla\cdot u=h_{1},\\
  &\partial_t u+P_{\rho}(1,1)\nabla n+P_{\theta}(1,1)\nabla \sigma+E-\mu\Delta u-(\mu+\mu')\nabla\nabla \cdot u=h_{2},\\
  &\partial_t \sigma +\frac{P_{\theta}(1,1)}{c_{\nu}}\nabla \cdot u-\frac{\kappa}{c_{\nu}}\Delta \sigma=h_{3},\\
  &\partial_t E-\nabla\times B-u=h_{4},\\
  &\partial_t B+\nabla \times E=0,\\
  &\nabla \cdot E=-n, \ \  \nabla\cdot B=0.
\end{aligned}\right.
\end{equation}
Initial data are given by
\begin{eqnarray}\label{NI}
[n,u,\sigma,E,B]|_{t=0}=[\rho_{0}-1,u_{0},\theta_{0}-1,E_{0},B_0],%\ \ \ x\in\mathbb{R}^{3},
\end{eqnarray}
with the compatibility condition
\begin{eqnarray}\label{NC}
\nabla \cdot E_0=-n_{0}, \ \
\nabla \cdot B_0=0.% \ \ \ x\in\mathbb{R}^{3}.
\end{eqnarray}
Here the inhomogeneous source terms are
\begin{equation}\label{sec5.ggg}
\arraycolsep=1.5pt \left\{
 \begin{aligned}
 &h_{1}=-\nabla\cdot(n u)=-\nabla\cdot h_{4},\\
 &\begin{array}{ll}
 h_{2}=&\displaystyle- u \cdot \nabla
u-\left[\frac{P_{\rho}(n+1,\sigma+1)}{n+1}-\frac{P_{\rho}(1,1)}{1}\right]\nabla
n
-\left[\frac{P_{\theta}(n+1,\sigma+1)}{n+1}-\frac{P_{\theta}(1,1)}{1}\right]\nabla
\sigma\\[3mm]
&\displaystyle-u\times B +\left[\frac{\mu}{n+1}-\frac{\mu}{1}\right]\Delta u+\left[\frac{\mu+\mu'}{n+1}-\frac{\mu+\mu'}{1}\right]\nabla \nabla \cdot u,\\
\end{array}\\[3mm]
 &\begin{array}{ll}
 h_{3}=&\displaystyle-u \cdot \nabla
\sigma-\left[\frac{(\sigma+1)P_{\theta}(n+1,\sigma+1)}{c_{\nu}(n+1)}-\frac{P_{\theta}(1,1)}{c_{\nu}}\right]\nabla
\cdot u
+\left[\frac{\kappa}{c_{\nu}(n+1)}-\frac{\kappa}{c_{\nu}}\right]\Delta
\sigma\\[3mm]
&\displaystyle+\frac{\mu}{2c_{\nu}(n+1)}|\nabla u+(\nabla
u)'|^{2}+\frac{\mu'}{c_{\nu}(n+1)}(\nabla \cdot u)^{2},\\
\end{array}\\[3mm]
 &h_{4}=nu.
\end{aligned}\right.
\end{equation}

For brevity of presentation we still use $U=[n,u,\sigma,E,B]$ to
denote the solution to  the linearized homogeneous system
\begin{equation}\label{Linear}
\left\{
  \begin{aligned}
  &\partial_t n+\nabla\cdot u=0,\\
  &\partial_t u+P_{\rho}(1,1)\nabla n+P_{\theta}(1,1)\nabla \sigma+E-\mu\Delta u-(\mu+\mu')\nabla\nabla \cdot u=0,\\
  &\partial_t \sigma +\frac{P_{\theta}(1,1)}{c_{\nu}}\nabla \cdot u-\frac{\kappa}{c_{\nu}}\Delta \sigma=0,\\
  &\partial_t E-\nabla\times B-u=0,\\
  &\partial_t B+\nabla \times E=0,\\
  &\nabla \cdot E=-n, \ \  \nabla\cdot B=0.
  \end{aligned}\right.
\end{equation}
with the given initial data
\begin{eqnarray}\label{NLI}
[n,u,\sigma,E,B]|_{t=0}=[\rho_{0}-1,u_{0},\theta_{0}-1,E_{0},B_0],%\ \ \ x\in\mathbb{R}^{3},
\end{eqnarray}
satisfying the compatibility  condition
\begin{eqnarray}\label{LNC}
\nabla \cdot E_0=-n_{0}, \ \
\nabla \cdot B_0=0.%, \ \ \ x\in\mathbb{R}^{3}.
\end{eqnarray}

\section{Time-decay property of the linearized homogeneous system} \label{sec4}

In order to study the more accurate large-time asymptotic profile,
we need to carry out the spectral analysis of the linearized system \eqref{Linear}.
As in \cite{Duan}, the linearized system \eqref{Linear} can be
written as two decoupled subsystems which govern the time evolution
of $n$, $\nabla\cdot u$, $\sigma$, $\nabla \cdot E$ and
$\nabla\times u$, $\nabla\times E$ and $\nabla \times B$
respectively. We decompose the solution to
\eqref{Linear}-\eqref{LNC} into two parts in the form of
\begin{eqnarray}\label{decompo}
&& \left[
  \begin{array}{c}
   n(t,x)\\
   u(t,x)\\
   \sigma(t,x)\\
   E(t,x)\\
   B(t,x)\\
 \end{array}\right]
 =
 \left[
  \begin{array}{c}
   n(t,x)\\
   u_{\parallel}(t,x)\\
   \sigma(t,x)\\
   E_{\parallel}(t,x)\\
   0\\
 \end{array}\right]+
\left[
  \begin{array}{c}
   0 \\
   u_{\perp}(t,x)\\
     0\\
   E_{\perp}(t,x)\\
  B(t,x)\\
 \end{array}\right],
\end{eqnarray}
where $u_{\parallel}$, $u_{\perp}$ are defined by
\begin{equation*}
u_{\parallel}=-(-\Delta)^{-1}\nabla\nabla\cdot u,\ \ \
u_{\perp}=(-\Delta)^{-1}\nabla\times(\nabla\times u),
\end{equation*}
and likewise for $E_{\parallel}$, $E_{\perp}$. For brevity, the
first part on the right of \eqref{decompo} is called the fluid part
and the second part is called the electromagnetic part, and we also
denote
\begin{equation*}
U_\parallel  =  [n,u_{\parallel},\sigma], \quad U_\perp =
[u_{\perp},E_{\perp},B].
\end{equation*}
Notice that to the end, $E_{\parallel}$ is always given by
$$
E_{\parallel}=(-\De)^{-1}\nabla n.
$$
The fluid part $U_\parallel$ satisfies
\begin{equation}\label{Fluid1}
\left\{
  \begin{aligned}
  &\partial_t n +\nabla\cdot u_{\parallel}=0,\\
  &\partial_t  u_{\parallel}+\alpha_{1}\nabla n+\alpha_{2}\nabla
  \sigma
  +E_{\parallel}-\mu \Delta u_{\parallel}-(\mu+\mu')\nabla \nabla \cdot u_{\parallel}=0,\\
  &\partial_t \sigma +\alpha_{3}\nabla\cdot u_{\parallel}-\bar{\kappa}\Delta \sigma=0.
\end{aligned}\right.
\end{equation}
Initial data are given by
\begin{equation}\label{Fluid2I}
[n,\ u_{\parallel},\sigma]|_{t=0}=[n_{0},\
u_{0,\parallel},\sigma_{0}].
\end{equation}
Here, we have denoted $\alpha_{1}=P_{\rho}(1,1),\
\alpha_{2}=P_{\theta}(1,1),\
\alpha_{3}=\frac{P_{\theta}(1,1)}{c_{\nu}},\
\bar{\kappa}=\frac{\kappa}{c_{\nu}}$.

Notice that the fluid part \eqref{Fluid1} of the linearized equation for non-isentropic Navier-Stokes-Maxwell are in accordance with the linearized Navier-Stokes-Poisson equations in \cite{ZLZ}. Here  and in the sequel, we still use the same notations for the
convenience of comparison with \cite{ZLZ}. For the clear reference, we list some properties of the above linearized equations \eqref{Fluid1} as follows.

After taking the Fourier transformation in $x$ for \eqref{Fluid1},
replacing $\hat{E}_{\parallel}$ by $\frac{i\xi}{|\xi|^{2}}\hat{n}$,
the fluid part
$\hat{U}_{\parallel}=[\hat{n},\hat{u}_{\parallel},\hat{\sigma}]$
satisfies the following system of 1st-order ODEs
\begin{equation}\label{FluidF}
\left\{
  \begin{aligned}
  &\partial_t \hat{n}+i\xi\cdot \hat{u}_{\parallel}=0,\\
  &\partial_t \hat{u}_{\parallel}+\alpha_{1}i\xi\hat{n}+\alpha_{2}i\xi\hat{\sigma}
  +\frac{i\xi}{|\xi|^{2}}\hat{n}+\mu|\xi|^{2}\hat{u}_{\parallel}+(\mu+\mu')\xi\xi\cdot\hat{u}_{\parallel}=0,\\
  &\partial_t \hat{\sigma}+\alpha_{3}i\xi\cdot \hat{u}_{\parallel}+\bar{\kappa}|\xi|^{2}\hat{\sigma}=0.\\
\end{aligned}\right.
\end{equation}
Initial data are given as
\begin{equation}\label{FluidFI}
\hat{U}_{\parallel}(t,\xi)|_{t=0}=\hat{U}_{\parallel
0}(\xi)=:[\hat{n}_{0},\ \tilde{\xi}\tilde{\xi}\cdot \hat{u}_{0},\
\hat{\sigma}_{0}].
\end{equation}
Here we set $\tilde{\xi}=\xi/|\xi|$ for $|\xi|\neq 0$. Then the solution to \eqref{FluidF}, \eqref{FluidFI} can be written
as
\begin{equation*}
\hat{U}_{\parallel}(t,\xi)^T={e^{A(i\xi)t}}\hat{U}_{\parallel
0}(\xi)^{T}:=(a_{ij}(\xi,t))_{5\times5}\hat{U}_{\parallel
0}(\xi)^{T},
\end{equation*}
with the matrix $A(i\xi)$ defined by
\begin{equation}\label{matrix}
A(i\xi)=:\left(\begin{array} {ccc}
0 \ \ & -i\xi^{t} &\ \ \ 0\\
-i\xi(\alpha_{1}+|\xi|^{-2})\ \   & -\mu|\xi|^{2}I_{3\times 3}-(\mu+\mu')\xi\otimes \xi  &\ \ \  -i\xi \alpha_{2}\\
0\ \  & -i\xi^{t}\alpha_{3} & -\bar{\kappa}|\xi|^{2}
\end{array} \right)_{5\times 5}.
\end{equation}

 The Fourier transformation
$(\hat{n},\hat{u}_{\parallel},\hat{\sigma},\hat{E}_{\parallel})$ of
solution $(n,u_{\parallel},\sigma,E_{\parallel})$ can be expressed
as
\begin{equation*}
  \begin{aligned}
  &\hat{n}(\xi,t)=N(t)\hat{U}_{0,\parallel}(\xi)=(N_{1}(t)+N_{2}(t))\hat{U}_{0,\parallel}(\xi),\\
  & \hat{u}_{\parallel}(\xi,t)=M(t)\hat{U}_{0,\parallel}(\xi)=(M_{1}(t)+M_{2}(t))\hat{U}_{0,\parallel}(\xi),\\
  &\hat{\sigma}(\xi,t)=Q(t)\hat{U}_{0,\parallel}(\xi)=(Q_{1}(t)+Q_{2}(t))\hat{U}_{0,\parallel}(\xi),\\
  &\hat{E}_{\parallel}(\xi,t)=\frac{i\xi}{|\xi|^{2}}\hat{n}(\xi,t)=\frac{i\xi}{|\xi|^{2}}(N_{1}(t)+N_{2}(t))\hat{U}_{0,\parallel}(\xi),
\end{aligned}
\end{equation*}
where the matrices $N_{i}(t),\ M_{i}(t),\ Q_{i}(t)$ $(i=1,2)$ are defined as
\begin{equation*}
  \begin{aligned}
  & N_{1}(t)=N_{1}(\xi,t)=(a_{11}(\xi,t),0)_{1\times 5},\ \  N_{2}(t)=N_{2}(\xi,t)=(0,a_{1l}(\xi,t))_{1\times 5},\ \ l=2,3,4,5,\\
  & M_{1}(t)=M_{1}(\xi,t)=(a_{k1}(\xi,t),0)_{3\times 5},\ \  M_{2}(t)=M_{2}(\xi,t)=(0,a_{kl}(\xi,t))_{3\times 5},\ \ k=2,3,4,\ l=2,3,4,5,\\
  & Q_{1}(t)=Q_{1}(\xi,t)=(a_{51}(\xi,t),0,a_{55}(\xi,t))_{1\times 5},\ \  Q_{2}(t)=Q_{2}(\xi,t)=(0,a_{5l}(\xi,t),0)_{1\times 5},\ \ l=2,3,4.\\
\end{aligned}
\end{equation*}
The following time-frequency pointwise estimates can be found in \cite{ZLZ}, which is crucial to the $L^{2}$ time-decay rates of solutions to linearized equations \eqref{Fluid1}.
\begin{lemma} [\textbf{Lemma 2.4} in \cite{ZLZ}]
For $0<|\xi|\leq r_{1}$, we have
\begin{equation*}
  \begin{aligned}
  & |||N_{1}(t)|||\leq Ce^{-\lambda|\xi|^{2} t},\ \  |||N_{2}(t)|||\leq C|\xi|e^{-\lambda|\xi|^{2} t},\\
  & |||M_{1}(t)|||\leq C\frac{1}{|\xi|}e^{-\lambda|\xi|^{2}  t},\ \  |||M_{2}(t)|||\leq Ce^{-\lambda|\xi|^{2} t},\\
  & |||Q_{1}(t)|||\leq Ce^{-\lambda |\xi|^{2} t},\ \  |||Q_{2}(t)|||\leq C|\xi|e^{-\lambda|\xi|^{2} t},
\end{aligned}
\end{equation*}
where $C > 0,\ \lambda>0$ are  positive constants.
\end{lemma}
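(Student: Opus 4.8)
The plan is to treat the matrix exponential $e^{A(i\xi)t}$ through a spectral decomposition, after first stripping away the low-frequency singularity carried by the electric coupling. Since we work on the longitudinal (curl-free) part, $\hat{u}_\parallel$ is parallel to $\xi$, so writing $\hat{u}_\parallel = \tilde\xi\, v$ with the scalar $v = \tilde\xi\cdot\hat{u}_\parallel$ collapses \eqref{FluidF} to a $3\times3$ system for $[\hat n, v, \hat\sigma]$ governed by
\begin{equation*}
\tilde A(\xi) = \begin{pmatrix} 0 & -i|\xi| & 0 \\ -i(\alpha_1|\xi| + |\xi|^{-1}) & -\nu_1|\xi|^2 & -i\alpha_2|\xi| \\ 0 & -i\alpha_3|\xi| & -\bar\kappa|\xi|^2 \end{pmatrix}, \qquad \nu_1 = 2\mu+\mu'>0,
\end{equation*}
where the $u$-block entries $a_{kl}$ of $e^{A(i\xi)t}$ are recovered as $\tilde\xi_k\tilde\xi_l$ times the corresponding scalar propagator, so that the matrix norms $|||\cdot|||$ are controlled by the entries of $e^{\tilde A t}$. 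The only obstruction to a uniform analysis as $\xi\to0$ is the singular entry $-i|\xi|^{-1}$ in position $(2,1)$, inherited from $\hat E_\parallel = i\xi|\xi|^{-2}\hat n$.

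First I would remove this singularity by the diagonal conjugation $\tilde A = D\,\check A\,D^{-1}$ with $D = \mathrm{diag}(|\xi|,1,1)$, i.e. rescaling $\hat n = |\xi|\check n$, which yields
\begin{equation*}
\check A(\xi) = \begin{pmatrix} 0 & -i & 0 \\ -i(\alpha_1|\xi|^2 + 1) & -\nu_1|\xi|^2 & -i\alpha_2|\xi| \\ 0 & -i\alpha_3|\xi| & -\bar\kappa|\xi|^2 \end{pmatrix},
\end{equation*}
whose entries are uniformly bounded on $0<|\xi|\le r_1$. Both matrices share the characteristic polynomial $p(\lambda) = \lambda^3 + (\nu_1+\bar\kappa)|\xi|^2\lambda^2 + (1 + (\alpha_1+\alpha_2\alpha_3)|\xi|^2 + \nu_1\bar\kappa|\xi|^4)\lambda + \bar\kappa|\xi|^2(1+\alpha_1|\xi|^2)$, which at $\xi=0$ reduces to $\lambda(\lambda^2+1)$. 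Hence the three eigenvalues bifurcate from $\{0,\,i,\,-i\}$, and a regular perturbation expansion in $|\xi|^2$ (via the implicit function theorem, legitimate because these roots are simple and stay separated for $r_1$ small) gives $\lambda_0 = -\bar\kappa|\xi|^2 + O(|\xi|^4)$ and $\lambda_\pm = \pm i - \tfrac{\nu_1}{2}|\xi|^2 \pm \tfrac{i}{2}(\alpha_1+\alpha_2\alpha_3)|\xi|^2 + O(|\xi|^4)$. I would then record the resulting dissipative bound $\mathrm{Re}\,\lambda_j(\xi) \le -\lambda|\xi|^2$ for all $0<|\xi|\le r_1$, with $\lambda = \tfrac12\min\{\bar\kappa,\nu_1/2\}$.

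With the spectrum understood I would write $e^{\check A t} = \sum_{j} e^{\lambda_j t}\check P_j$, where $\check P_j = \prod_{k\ne j}(\check A-\lambda_k I)/(\lambda_j-\lambda_k)$ are the eigenprojections; since the $\lambda_j$ remain separated by a gap bounded below and $\check A$ is uniformly bounded, $\|\check P_j\|\le C$ uniformly, whence $\|e^{\check A t}\|\le Ce^{-\lambda|\xi|^2 t}$ and every entry of $e^{\check A t}$ is $O(1)e^{-\lambda|\xi|^2 t}$. A short refinement is needed for the $n$--$\sigma$ couplings: the $(1,3)$ and $(3,1)$ entries of $\check A$ vanish and the shortest connecting path runs through $v$, so these entries of $\check A^m$ (and hence of $e^{\check A t}$) carry an extra factor $|\xi|$. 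Finally I would conjugate back, $a_{ij} = D_{ii}(e^{\check A t})_{ij}D_{jj}^{-1}$, so that the propagator entries inherit the factor $|\xi|^{\,\delta_{i1}-\delta_{j1}}$: this gives $O(1)$ for $a_{11}$ ($N_1$), $O(|\xi|)$ for $a_{1l}$ ($N_2$), $O(|\xi|^{-1})$ for $a_{21},a_{31},a_{41}$ ($M_1$), $O(1)$ for the remaining $u$-block ($M_2$), $O(|\xi|)$ for $a_{5l}$, $l=2,3,4$ ($Q_2$), and, using the extra $|\xi|$ from the $n$--$\sigma$ refinement, $O(1)$ rather than $O(|\xi|^{-1})$ for $a_{51}$ together with the diagonal $a_{55}$ ($Q_1$), matching all six stated estimates.

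The step I expect to be the main obstacle is controlling the singular $|\xi|^{-1}$ entry uniformly: it is what prevents a naive operator-norm bound on $e^{\tilde A t}$, and the whole argument hinges on showing this singularity is removable by the scaling $D$ while tracking exactly how the factors $|\xi|^{\pm1}$ redistribute among the entries. In particular one must verify the structural cancellation that upgrades $a_{51}$ from the generic $O(|\xi|^{-1})$ down to $O(1)$, and confirm that the perturbation of the nonzero plasma frequencies $\pm i$ is genuinely dissipative at order $|\xi|^2$ (i.e. $\mathrm{Re}\,\lambda_\pm<0$), since the decay of the whole linearized system rests on these second-order real parts.
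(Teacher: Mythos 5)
The paper itself offers no proof of this lemma: it is imported verbatim as Lemma 2.4 of \cite{ZLZ}, so there is no in-text argument to compare yours against line by line. Your derivation is nonetheless correct and is in the same spirit as the spectral analysis performed in \cite{ZLZ}: reduce to the scalar longitudinal $3\times 3$ system, note that the electric coupling $i\xi|\xi|^{-2}\hat n$ forces the limit spectrum $\{0,\pm i\}$ at $\xi=0$ (plasma oscillation), expand the three simple eigenvalues in $|\xi|^{2}$ to obtain $\mathrm{Re}\,\lambda_j\le -\lambda|\xi|^{2}$, and read off the entrywise sizes from the eigenprojections after conjugating away the $|\xi|^{-1}$ singularity by $D=\mathrm{diag}(|\xi|,1,1)$. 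I checked your characteristic polynomial and the expansions $\lambda_0=-\bar{\kappa}|\xi|^{2}+O(|\xi|^{4})$, $\mathrm{Re}\,\lambda_\pm=-\tfrac{\nu_1}{2}|\xi|^{2}+O(|\xi|^{4})$ with $\nu_1=2\mu+\mu'$; they are right, and the bookkeeping of the factors $|\xi|^{\pm 1}$ under $a_{ij}=D_{ii}(e^{\check A t})_{ij}D_{jj}^{-1}$ reproduces all six stated bounds.

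Two points should be tightened. First, the inference ``the $(3,1)$ entries of $\check A^{m}$ carry an extra factor $|\xi|$, hence so does the $(3,1)$ entry of $e^{\check A t}$'' is not valid if read through the power series (summing $t^{m}|(\check A^{m})_{31}|/m!$ entrywise destroys the factor $e^{-\lambda|\xi|^{2}t}$); but it is immediate from the decomposition you already set up, since each $\check P_j=\bigl((\check A-\lambda_k)(\check A-\lambda_l)\bigr)/\bigl((\lambda_j-\lambda_k)(\lambda_j-\lambda_l)\bigr)$ is a quadratic polynomial in $\check A$ with $(\check P_j)_{31}=(\check A^{2})_{31}/(\cdots)=\check A_{32}\check A_{21}/(\cdots)=O(|\xi|)$ because $\check A_{31}=0$ and $I_{31}=0$. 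Second, the bound $|||Q_{2}(t)|||\le C|\xi|e^{-\lambda|\xi|^{2}t}$ requires the analogous refinement for the $(3,2)$ entry, which your stated refinement (covering only the $(1,3)$ and $(3,1)$ entries) does not address; it follows in the same way because the entire off-diagonal of row $3$ of $\check A$ is $O(|\xi|)$, whence $(\check P_j)_{32}=O(|\xi|)$ as well. With these two repairs the argument is complete and self-contained.
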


\begin{lemma} [\textbf{Lemma 2.5} in \cite{ZLZ}]
For $|\xi|>r_{1}$, we have
\begin{equation*}
 |||N(t),M(t),Q(t)|||\leq C(1+t)^{3}e^{-\lambda t},
\end{equation*}
where $C > 0,\ \lambda>0$ are  positive constants.
\end{lemma}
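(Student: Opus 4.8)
The plan is to reduce the stated propagator bound to a pure semigroup estimate for a $3\times3$ longitudinal system, and then to prove the clean exponential bound $Ce^{-\lambda t}$ (without any polynomial factor) for that system by a Fourier-space Lyapunov functional; since a pure exponential bound a fortiori implies the claimed $C(1+t)^3e^{-\lambda t}$, this suffices. First I would exploit that the fluid part is longitudinal: writing $s=|\xi|$, $\tilde\xi=\xi/|\xi|$ and $v=\tilde\xi\cdot\hat u_\parallel$, and using $\hat E_\parallel=\frac{i\xi}{|\xi|^2}\hat n$, the system \eqref{FluidF} collapses to
\begin{equation*}
\begin{aligned}
&\partial_t\hat n+isv=0,\qquad \partial_t\hat\sigma+is\alpha_3 v+\bar\kappa s^2\hat\sigma=0,\\
&\partial_t v+is(\alpha_1+s^{-2})\hat n+is\alpha_2\hat\sigma+(2\mu+\mu')s^2v=0.
\end{aligned}
\end{equation*}
The solution operator of this $\mathbb C^3$ system carries the same norm as the block $(N(t),M(t),Q(t))$ restricted to longitudinal data (the velocity entries differ only by the dressing $\tilde\xi$, with $|\tilde\xi|=1$), so it is enough to bound it uniformly for $s>r_1$.

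Next I would introduce the symmetrized energy $\mathcal E_0=(\alpha_1+s^{-2})|\hat n|^2+|v|^2+c_\nu|\hat\sigma|^2$, whose weights are chosen so that the skew-Hermitian hyperbolic couplings cancel pairwise: using $c_\nu\alpha_3=\alpha_2$ together with the identities $\mathrm{Im}(\bar v\hat n)=-\mathrm{Im}(\bar{\hat n}v)$ and $\mathrm{Im}(\bar v\hat\sigma)=-\mathrm{Im}(\bar{\hat\sigma}v)$, a direct computation gives
\begin{equation*}
\frac{d}{dt}\mathcal E_0=-2(2\mu+\mu')s^2|v|^2-2c_\nu\bar\kappa s^2|\hat\sigma|^2.
\end{equation*}
This is the expected parabolic dissipation of the velocity and temperature modes, but it produces no dissipation for $\hat n$, which carries no viscosity.

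To manufacture density dissipation I would add the interaction functional $I=\mathrm{Im}(\bar{\hat n}v)$, for which
\begin{equation*}
\frac{d}{dt}I=s|v|^2-(\alpha_1 s+s^{-1})|\hat n|^2-\alpha_2 s\,\mathrm{Re}(\bar{\hat n}\hat\sigma)-(2\mu+\mu')s^2I,
\end{equation*}
and set $\mathcal E=\mathcal E_0+\frac{\beta}{s}I$ with $\beta>0$ small. The weight $1/s$ is the decisive choice: it turns the density term into $-\beta(\alpha_1+s^{-2})|\hat n|^2\le-\beta\alpha_1|\hat n|^2$, a damping whose rate does not degrade as $s\to\infty$, consistent with the effective density eigenvalue tending to the fixed negative constant $-\alpha_1/(2\mu+\mu')$. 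The remaining terms are handled by Young's inequality: $\beta|v|^2$ and the back-reaction $\beta(2\mu+\mu')sI$ (bounded by $\frac{\beta\alpha_1}{4}|\hat n|^2+C\beta s^2|v|^2$) are absorbed into the strong $s^2|v|^2$ dissipation, while $\beta\alpha_2 s\,\mathrm{Re}(\bar{\hat n}\hat\sigma)$ is split between the density and the $s^2|\hat\sigma|^2$ dissipation. Choosing $\beta$ small, depending on $r_1$ and the fixed constants, yields
\begin{equation*}
\frac{d}{dt}\mathcal E\le-\tfrac{\beta\alpha_1}{2}|\hat n|^2-(2\mu+\mu')s^2|v|^2-c_\nu\bar\kappa s^2|\hat\sigma|^2\le-\lambda\,\mathcal E,\qquad s>r_1,
\end{equation*}
since $\mathcal E\sim|\hat n|^2+|v|^2+|\hat\sigma|^2$ for $\beta$ small. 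Gronwall's inequality then gives $|[\hat n,v,\hat\sigma](t)|\le Ce^{-\lambda t}|[\hat n,v,\hat\sigma](0)|$ uniformly in $s>r_1$, which transfers back to $N(t),M(t),Q(t)$ and proves the claim.

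The main obstacle is exactly the absence of a direct dissipation mechanism for the density: its entire decay must be extracted from the pressure coupling to $v$. The delicate point is to weight the interaction functional so that the induced density damping is bounded below uniformly in $s>r_1$ (here $O(1)$, which is what forces the $1/s$ weight), while simultaneously ensuring that its parabolic back-reaction $-(2\mu+\mu')s^2I$ does not reintroduce an $s^2|\hat n|^2$ term exceeding the damping it creates; this is what dictates trading it against $s^2|v|^2$ with a small constant and fixing the order of smallness ($\beta$ chosen after $r_1$). A minor bookkeeping point is that the pressure asymmetry $\alpha_1\ne1$ must be absorbed into the weighted inner product for the hyperbolic terms to cancel, and that the electric term $s^{-2}$ is a harmless lower-order perturbation here (it is only at low frequency that it serves to improve the rates of \cite{ZLZ}). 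This energy route also explains the stated $(1+t)^3$ factor: it reflects the non-sharp control of near-coincident eigenvalues in the characteristic-polynomial analysis of \cite{ZLZ}, which the Lyapunov argument bypasses entirely.
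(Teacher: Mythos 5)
Your argument is correct, but it is a genuinely different route from the paper's: the paper does not prove this lemma at all, it simply quotes Lemma~2.5 of \cite{ZLZ}, whose proof there is spectral --- an asymptotic expansion of the eigenvalues of $A(i\xi)$ for large $|\xi|$, with the factor $(1+t)^{3}$ arising from the possible coalescence of roots of the characteristic polynomial (Jordan-block behaviour). Your Fourier--Lyapunov functional $\mathcal E=\mathcal E_0+\frac{\beta}{s}\,\mathrm{Im}(\bar{\hat n}v)$ bypasses the eigenvalue analysis entirely and in fact yields the strictly stronger bound $Ce^{-\lambda t}$ with no polynomial loss; the computations check out (the weight $c_\nu$ on $|\hat\sigma|^2$ does cancel the $v$--$\sigma$ coupling since $c_\nu\alpha_3=\alpha_2$, the $1/s$ weight on the cross term gives density damping uniform in $s>r_1$, and the back-reaction $-(2\mu+\mu')s^2I$ and the $\mathrm{Re}(\bar{\hat n}\hat\sigma)$ term are absorbed exactly as you describe once $\beta$ is chosen small after $r_1$). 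What each approach buys: the spectral route of \cite{ZLZ} gives the precise leading-order wave structure and is needed anyway at low frequency, where the refined pointwise bounds on $N_i,M_i,Q_i$ (and this paper's improvement via $\nabla\cdot E_0=-n_0$) live; your energy route is more elementary, uniform, and robust, and sharpens the high-frequency statement. Two small points you should make explicit: the reduction from the $5\times5$ propagator to your $3\times3$ system also needs the observation that the transverse velocity block evolves by $e^{-\mu|\xi|^2t}$ (trivially better), and the positivity of the longitudinal viscosity $2\mu+\mu'$ and of $\alpha_1=P_\rho(1,1)$, $c_\nu$ must be invoked from the standing structural assumptions ($\mu>0$, $\mu+\frac23\mu'>0$, $P$ increasing in $\rho$); neither affects the validity of the proof.
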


\begin{remark}It is obvious that the time-pointwise estimate  of $|||M_{1}(t)|||$ over low frequency leads to the slow
time-decay rate $(1+t)^{-\frac{1}{4}}$ in $L^2$-norm. However, we may improve the time-decay properties in \cite{ZLZ} below. The reasoning is that we can easily eliminate the singularity $\frac{1}{|\xi|}$ in $M_{1}(t)$ through  combining  the compatible condition $i\xi \cdot \hat{E}_{0}(\xi)=-\hat{n}_{0}(\xi)$ and  the fact that $M_{1}(t)$ is only the coefficient of $\hat{n}_{0}(\xi)$. Similarly the time-decay estimate for $\|\partial^{k} n(t)\|$ can also be improved.
\end{remark}
\begin{lemma}[Improved version of \textbf{Lemma 2.7} in \cite{ZLZ}]\label{lemma.decay1}
The solution $[n,u_{\parallel},\sigma,E_{\parallel}]$ to the Cauchy
problem $\eqref{Fluid1}$-$\eqref{Fluid2I}$ satisfies the following
time-decay property:
\begin{eqnarray}\label{col.decay1}
 && \left\{
 \begin{aligned}
 & \|\partial^{k} n(t)\|\leq C (1+t)^{-\left(\frac{5}{4}+\frac{k}{2}\right)}\left(\|[E_{0},u_{0,\parallel},\sigma_{0}]\|_{L^{1}}
 +\|\partial^{k}[n_{0},u_{0,\parallel},\sigma_{0}]\|\right),\\
 & \|\partial^{k} u_{\parallel}(t)\|\leq C (1+t)^{-\left(\frac{3}{4}+\frac{k}{2}\right)}\left(\|[E_{0},u_{0,\parallel},\sigma_{0}]\|_{L^{1}}
 +\|\partial^{k}[n_{0},u_{0,\parallel},\sigma_{0}]\|\right),\\
 & \|\partial^{k} \sigma(t)\|\leq C (1+t)^{-\left(\frac{3}{4}+\frac{k}{2}\right)}\left(\|[n_{0},u_{0,\parallel},\sigma_{0}]\|_{L^{1}}
 +\|\partial^{k}[n_{0},u_{0,\parallel},\sigma_{0}]\|\right),\\
 & \|\partial^{k} E_{\parallel}(t)\|\leq C (1+t)^{-\left(\frac{3}{4}+\frac{k}{2}\right)}\left(\|[E_{0},u_{0,\parallel},\sigma_{0}]\|_{L^{1}}
 +\|\partial^{k}[n_{0},u_{0,\parallel},\sigma_{0}]\|\right),\\
\end{aligned}\right.
\end{eqnarray}
where $k=0,1,2,\cdots$ and $C>0$ is a positive constant.
\end{lemma}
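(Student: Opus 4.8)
The plan is to read off each component from the Fourier representation $\hat U_{\parallel}(t,\xi)=e^{A(i\xi)t}\hat U_{0,\parallel}(\xi)$, apply Plancherel's identity $\|\partial^k f(t)\|^2=\int_{\R^3}|\xi|^{2k}|\hat f(t,\xi)|^2\,d\xi$, and split the frequency domain into the low-frequency region $\{|\xi|\le r_1\}$ and the high-frequency region $\{|\xi|>r_1\}$. The high-frequency region is immediate: by the high-frequency estimate above (Lemma 2.5 of \cite{ZLZ}), $|||N(t),M(t),Q(t)|||\le C(1+t)^3 e^{-\lambda t}$, so that, for instance,
\[
\int_{|\xi|>r_1}|\xi|^{2k}|\hat n(t,\xi)|^2\,d\xi\le C(1+t)^6 e^{-2\lambda t}\,\|\partial^k[n_0,u_{0,\parallel},\sigma_0]\|^2,
\]
and since $(1+t)^3 e^{-\lambda t}$ decays faster than any polynomial, this contribution is dominated by all four asserted rates. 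For $E_\parallel$ one uses $\hat E_\parallel=\tfrac{i\xi}{|\xi|^2}\hat n$ together with $\tfrac1{|\xi|}\le\tfrac1{r_1}$ on this region, so the same exponential bound applies. Hence everything reduces to the low-frequency region.

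On $\{|\xi|\le r_1\}$ I would combine the $L^1$--$L^\infty$ Fourier bound $|\hat f_0(\xi)|\le\|f_0\|_{L^1}$, the low-frequency pointwise bounds above (Lemma 2.4 of \cite{ZLZ}), and the elementary scaling estimate
\[
\int_{|\xi|\le r_1}|\xi|^{2m}e^{-2\lambda|\xi|^2 t}\,d\xi\le C(1+t)^{-(m+3/2)},\qquad m\ge 0.
\]
The decisive ingredient is the compatibility condition, which in Fourier variables reads $\hat n_0(\xi)=-i\xi\cdot\hat E_0(\xi)$, so that $|\hat n_0(\xi)|\le|\xi|\,|\hat E_0(\xi)|\le|\xi|\,\|E_0\|_{L^1}$; this trades each occurrence of $\hat n_0$ for an extra factor $|\xi|$, at the price of replacing the datum $n_0$ by $E_0$.

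For $n=(N_1+N_2)\hat U_{0,\parallel}$, the term $N_1$ is the coefficient of $\hat n_0$ alone with $|||N_1|||\le Ce^{-\lambda|\xi|^2 t}$; invoking the compatibility bound supplies the missing $|\xi|$, so both $N_1$ and $N_2$ (the latter already carrying $|\xi|$) contribute with $m=k+1$, yielding $(1+t)^{-(5/4+k/2)}$. For $u_\parallel=(M_1+M_2)\hat U_{0,\parallel}$, the only dangerous term is $M_1$, whose bound $|||M_1|||\le C|\xi|^{-1}e^{-\lambda|\xi|^2 t}$ carries exactly the singularity responsible for the slow $(1+t)^{-1/4}$ rate in \cite{ZLZ}; but $M_1$ multiplies $\hat n_0$ only, so the compatibility factor $|\xi|$ cancels the $|\xi|^{-1}$ precisely, leaving $Ce^{-\lambda|\xi|^2 t}\|E_0\|_{L^1}$, hence $m=k$ and the improved rate $(1+t)^{-(3/4+k/2)}$, while the harmless $M_2$-part gives the same. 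For $\sigma=(Q_1+Q_2)\hat U_{0,\parallel}$ no cancellation is needed: $Q_1$ (the coefficient of $\hat n_0,\hat\sigma_0$, with $|||Q_1|||\le Ce^{-\lambda|\xi|^2 t}$) already gives $m=k$ and the rate $(1+t)^{-(3/4+k/2)}$ against $\|[n_0,\sigma_0]\|_{L^1}$, whereas $Q_2$ carries an extra $|\xi|$ and decays faster—this is why the $\sigma$-bound keeps $n_0$ rather than $E_0$. Finally, for $E_\parallel=\tfrac{i\xi}{|\xi|^2}\hat n$ the prefactor $|\xi|^{-1}$ is absorbed by the $|\xi|$ gained above ($N_1$ through compatibility, $N_2$ intrinsically), again leaving $m=k$ and the rate $(1+t)^{-(3/4+k/2)}$.

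I expect the main obstacle to be the careful bookkeeping of where the compatibility factor $|\xi|$ is actually available: it may be used only on the $\hat n_0$-column, that is on $N_1$, on $M_1$, and on the $\hat n_0$-entry of $Q_1$, and one must check that this column is precisely the one carrying the offending $|\xi|^{-1}$ in $M_1$ and in the $E_\parallel$-relation, while the $\hat\sigma_0$-entry of $Q_1$ is genuinely unimprovable and fixes the $\sigma$-rate at $3/4$. Once the matrices $N_i,M_i,Q_i$ are tracked entry by entry this is routine; the remaining integrals are all instances of the scaling estimate above, and summing the low- and high-frequency contributions produces the four stated bounds.
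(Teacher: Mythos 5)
Your proposal is correct and is essentially the argument the paper intends: the paper gives no detailed proof of this lemma, only the preceding remark that the compatibility condition $i\xi\cdot\hat{E}_{0}(\xi)=-\hat{n}_{0}(\xi)$ eliminates the $|\xi|^{-1}$ singularity in $M_{1}(t)$ (and likewise improves the $\|\partial^{k}n\|$ estimate), to be combined with the pointwise bounds of Lemmas 2.4--2.5 of \cite{ZLZ}; your low/high-frequency splitting, Plancherel, and the scaling integral are exactly the standard steps that fill in this sketch. Your entry-by-entry bookkeeping --- the compatibility factor $|\xi|$ available only on the $\hat{n}_{0}$-column, cancelling the $|\xi|^{-1}$ in $M_{1}$ and in the $E_{\parallel}$-relation, while the $\hat{\sigma}_{0}$-entry of $Q_{1}$ fixes the $\sigma$-rate at $\frac34+\frac k2$ --- matches the paper's reasoning precisely.
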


The electromagnetic part $[u_{\perp}(t,x),E_{\perp}(t,x),B(t,x)]$ satisfies the following equations:
\begin{equation}\label{magnetic1}
 \left\{
  \begin{aligned}
  &\partial_t u_{\perp} +E_\perp-\mu \Delta u_{\perp}=0,\\
  &\partial_t  E_{\perp}-\nabla\times B - u_{\perp}=0,\\
  &\partial_t B +\nabla\times E_{\perp}=0,
\end{aligned}\right.
\end{equation}
with initial data
\begin{equation}\label{magnetic2}
[u_{\perp},E_{\perp},B]=[u_{0,\perp},E_{0,\perp},B_{0}].
\end{equation}

The above linearized system coincides with  the electromagnetic part for the linearized isentropic Navier-Stokes-Maxwell system in \cite{Renjun Duan}. By taking Fourier transformation to the above linearized system \eqref{magnetic1}-\eqref{magnetic2}, one has

\begin{equation}\label{magnetic1F}
 \left\{
  \begin{aligned}
  &\partial_t \hat{u}_{\perp} +\hat{E}_\perp+\mu |\xi|^{2} \hat{u}_{\perp}=0,\\
  &\partial_t  \hat{E}_{\perp}-i\xi\times \hat{B} - \hat{u}_{\perp}=0,\\
  &\partial_t \hat{B} +i\xi\times \hat{E}_{\perp}=0,
\end{aligned}\right.
\end{equation}
with initial data
\begin{equation}\label{magnetic2F}
[\hat{u}_{\perp},\hat{E}_{\perp},\hat{B}]=[\hat{u}_{0,\perp},\hat{E}_{0,\perp},\hat{B}_{0}].
\end{equation}
As in \cite{Renjun Duan}, the solution $[\hat{u}_{\perp},\hat{E}_{\perp},\hat{B}]$ is expressed by
$$
[\hat{u}_{\perp},\hat{E}_{\perp},\hat{B}]^{T}=\mathcal{M} _{9\times 9}(t,\xi)[\hat{u}_{0,\perp},\hat{E}_{0,\perp},\hat{B}_{0}]^{T},
$$
with the following estimates on the matrix $\mathcal{M}_{9\times 9}(t,\xi)$.
\begin{lemma}[\textbf{Proposition 5.2} in \cite{Renjun Duan}] The elements of $\mathcal{M}$ have the following upper bounds for pointwise $ t\geq 0$, and $\xi\in \mathbb{R}^{3}$.

Over $D_{0}=\{\xi||\xi|^{2}\leq \epsilon\}$,
\begin{equation*}
|\mathcal{M}|\lesssim \left(
  \begin{array}{ccc}
 |\xi|^{2} & |\xi|^{4} & |\xi|\\
  |\xi|^{4} & |\xi|^{6} & |\xi|^{3}\\
  |\xi|  & |\xi|^{3} & 1\\
\end{array}\right)e^{-O(1)|\xi|^{4}t}+
\left(
  \begin{array}{ccc}
 1 & 1  & |\xi|\\
  1 & 1  & |\xi|\\
  |\xi|  & |\xi| & |\xi|^{2}\\
\end{array}\right)e^{-O(1)|\xi|^{2}t}.
\end{equation*}

Over $D_{\infty}=\{\xi||\xi|^{2}\geq L\}$,
\begin{equation*}
|\mathcal{M}|\lesssim \left(
  \begin{array}{ccc}
 1 & |\xi|^{-2} & |\xi|^{-3}\\
  |\xi|^{-2} & |\xi|^{-4} & |\xi|^{-5}\\
  |\xi|^{-3}  & |\xi|^{-5} & |\xi|^{-6}\\
\end{array}\right)e^{-O(1)|\xi|^{2}t}+
\left(
  \begin{array}{ccc}
 |\xi|^{-4} & |\xi|^{-2} & |\xi|^{-2}\\
 |\xi|^{-2} & 1 & 1\\
 |\xi|^{-2} & 1 & 1\\
\end{array}\right)e^{-O(1)|\xi|^{-2}t}.
\end{equation*}

Over $D_{1}=\{\xi|\epsilon\leq|\xi|^{2}\leq L\}$,
\begin{equation*}
|\mathcal{M}|\lesssim \left(
  \begin{array}{ccc}
 1 & 1 & 1\\
  1 & 1 & 1\\
  1  & 1 & 1\\
\end{array}\right)e^{-O(1)t}.
\end{equation*}
\end{lemma}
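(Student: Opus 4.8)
The plan is to compute the propagator $\mathcal{M}(t,\xi)=e^{t\mathcal{A}(\xi)}$ of the first-order system \eqref{magnetic1F} by first reducing its dimension and then reading the entrywise bounds off an explicit resolvent representation combined with an asymptotic analysis of the characteristic roots in the three frequency zones. The reduction exploits the transverse structure: since $\hat u_\perp$, $\hat E_\perp$, $\hat B$ are all perpendicular to $\xi$ and the operator $i\xi\times$ restricts to the plane $\xi^{\perp}$ as multiplication by $\pm|\xi|$ in the circular-polarization (helicity) basis, the $9\times 9$ flow preserves transversality and splits into two copies of a scalar $3\times 3$ system. Writing $\eta=\pm|\xi|$ and $(\hat u_\perp,\hat E_\perp,\hat B)=(a,b,c)\,e_{\pm}$ along a polarization vector with $i\xi\times e_{\pm}=\eta e_{\pm}$, the reduced generator is
$$
\mathcal{A}_\eta=\begin{pmatrix}-\mu|\xi|^2 & -1 & 0\\ 1 & 0 & \eta\\ 0 & -\eta & 0\end{pmatrix},
$$
whose characteristic polynomial is $p(\lambda)=\lambda^3+\mu|\xi|^2\lambda^2+(|\xi|^2+1)\lambda+\mu|\xi|^4$ (independent of the sign of $\eta$ since it depends only on $\eta^2=|\xi|^2$). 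The polarization vectors are $O(1)$, so each $3\times 3$ Cartesian block of $\mathcal{M}$ is a combination of the two scalar solutions whose entries are controlled, up to constants, by those of $e^{t\mathcal{A}_\eta}$; this is precisely why the stated bound is displayed as a $3\times 3$ array indexed by $(u_\perp,E_\perp,B)$.

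Next I would use the spectral representation $e^{t\mathcal{A}_\eta}=\frac{1}{2\pi i}\oint(\lambda I-\mathcal{A}_\eta)^{-1}e^{\lambda t}\,d\lambda=\sum_k \frac{\mathrm{adj}(\lambda_k I-\mathcal{A}_\eta)}{p'(\lambda_k)}\,e^{\lambda_k t}$, where $\lambda_k=\lambda_k(\xi)$ are the three roots of $p$. The adjugate is the explicit $\lambda$- and $\eta$-polynomial matrix with entries such as $\lambda^2+|\xi|^2$, $\pm\lambda$, $\mp|\xi|$, $(\lambda+\mu|\xi|^2)\lambda$, $\pm(\lambda+\mu|\xi|^2)|\xi|$, and $(\lambda+\mu|\xi|^2)\lambda+1$; on the zero set of $p$ the last of these satisfies the convenient identity $(\lambda+\mu|\xi|^2)\lambda+1=-|\xi|^2(\lambda+\mu|\xi|^2)/\lambda$. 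The $|\xi|$-powers in the three displayed matrices then arise from evaluating the adjugate entries at the $\lambda_k$ and dividing by $p'(\lambda_k)$, while the exponential factors come from $|e^{\lambda_k t}|=e^{t\,\mathrm{Re}\,\lambda_k}$.

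The decisive step is the root analysis in each zone. Over $D_0$ a regular perturbation of $\lambda^3+\lambda=0$ gives one slow mode $\lambda_0=-\mu|\xi|^4+O(|\xi|^6)$ and a conjugate oscillatory pair $\lambda_\pm=\pm i-\tfrac{\mu}{2}|\xi|^2+O(|\xi|^4)$, with $p'(\lambda_0),p'(\lambda_\pm)=O(1)$; inserting these into the adjugate produces the first matrix (carrying $e^{-O(1)|\xi|^4t}$) and the second (carrying $e^{-O(1)|\xi|^2t}$). Over $D_\infty$ the balance $\lambda^3+\mu|\xi|^2\lambda^2\approx0$ isolates the fast mode $\lambda_1=-\mu|\xi|^2+O(|\xi|^{-2})$ together with a pair $\lambda_{2,3}=\pm i|\xi|-\tfrac{1}{2\mu|\xi|^2}+\cdots$, and here $p'(\lambda_1)\sim|\xi|^4$, $p'(\lambda_{2,3})\sim|\xi|^3$; these yield the first high-frequency matrix ($e^{-O(1)|\xi|^2t}$) and the second ($e^{-O(1)|\xi|^{-2}t}$), the latter being the source of the regularity-loss noted in the introduction. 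Over $D_1$ I would instead invoke Routh–Hurwitz: for $\lambda^3+a_2\lambda^2+a_1\lambda+a_0$ with $a_2=\mu|\xi|^2$, $a_1=|\xi|^2+1$, $a_0=\mu|\xi|^4$ one has $a_2,a_0>0$ and $a_2a_1-a_0=\mu|\xi|^2>0$, so all roots have strictly negative real part; by continuity on the compact shell $\mathrm{Re}\,\lambda_k\le-c<0$ while the adjugate and $p'$ stay uniformly bounded (with $p'$ bounded away from $0$ off any root collision), giving the uniform bound $e^{-O(1)t}$.

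The hard part will be the high-frequency zone, where for several entries—most visibly the $(E_\perp,E_\perp)$ and $(B,B)$ positions—the leading terms of the adjugate cancel, so the naive power count is wrong and one must expand $\lambda_1,\lambda_{2,3}$ to the next order (or, more cleanly, use on-shell identities such as $(\lambda+\mu|\xi|^2)\lambda+1=-|\xi|^2(\lambda+\mu|\xi|^2)/\lambda$) to extract the genuine, non-generic powers $|\xi|^{-4}$ and $|\xi|^{-6}$. Comparable care is needed to attach each $|\xi|$-power to the correct exponential factor and to confirm that reconstructing the Cartesian blocks from the two helicity scalars introduces no additional powers of $|\xi|$.
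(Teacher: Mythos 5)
This lemma is not proved in the paper at all; it is imported verbatim from Proposition 5.2 of \cite{Renjun Duan}, so the only meaningful comparison is with that reference, whose argument your proposal essentially reproduces: reduction of the transverse system to a scalar cubic characteristic equation $\lambda^3+\mu|\xi|^2\lambda^2+(1+|\xi|^2)\lambda+\mu|\xi|^4=0$ (Duan gets it by eliminating variables, e.g. via $W=i\xi\times\hat B$, rather than by your helicity diagonalization, but the polynomial and the ensuing analysis are identical), representation of the propagator through the characteristic roots, and zone-by-zone root asymptotics. Your details check out: the expansions $\lambda_0=-\mu|\xi|^4+O(|\xi|^6)$, $\lambda_\pm=\pm i-\tfrac{\mu}{2}|\xi|^2+\cdots$ over $D_0$ and $\lambda_1=-\mu|\xi|^2+O(|\xi|^{-2})$, $\lambda_{2,3}=\pm i|\xi|-\tfrac{1}{2\mu}|\xi|^{-2}+\cdots$ over $D_\infty$, the sizes $p'(\lambda_0),p'(\lambda_\pm)=O(1)$, $p'(\lambda_1)\sim|\xi|^4$, $p'(\lambda_{2,3})\sim|\xi|^3$, and the adjugate entries (including the cancellations you flag at the $(E_\perp,E_\perp)$ and $(B,B)$ positions, plus the analogous one at $(u_\perp,u_\perp)$ for the oscillatory pair over $D_\infty$, where $\lambda_{2,3}^2+|\xi|^2=O(|\xi|^{-1})$) reproduce every entry of the three displayed matrices. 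The one point to tighten is $D_1$: the conjugate pair can collide on the real axis for intermediate $|\xi|$ (depending on $\mu$), where the Lagrange form $\sum_k \mathrm{adj}(\lambda_k I-\mathcal{A}_\eta)e^{\lambda_k t}/p'(\lambda_k)$ degenerates, so there one should stay with the contour-integral representation you already wrote, taking a fixed contour in $\{\mathrm{Re}\,\lambda\le -c/2\}$ enclosing the spectrum; Routh--Hurwitz plus compactness of the shell then give the uniform $e^{-O(1)t}$ bound irrespective of root multiplicities.
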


As in \cite{Duan,Renjun Duan}, it is a  standard procedure  to derive the following time-decay property of solutions to the  electromagnetic part $[u_{\perp},E_{\perp},B]$, one can refer to  Theorem 5.2 in \cite{Renjun Duan}.
\begin{lemma}\label{lemma.decay2}
The solution $[u_{\perp},E_{\perp},B]$ to the Cauthy problem $\eqref{magnetic1}$-$\eqref{magnetic2}$
satisfies the following time-decay property:
\begin{eqnarray}\label{Co.5.2}
 && \left\{
 \begin{aligned}
 &\begin{array}{ll}
 \|u_{\perp}\| \lesssim  (1+t)^{-\frac{3}{4}}\|[u_{0},E_{0}]\|_{L^{1}\cap L^{2}}+(1+t)^{-\frac{5}{8}}\|B_{0}\|_{L^{1}\cap L^{2}},\\
 \end{array}\\[2mm]
 &\begin{array}{ll}
 \|E_{\perp}\| \lesssim (1+t)^{-\frac{3}{4}}\|u_{0}\|_{L^{1}\cap L^{2}}+(1+t)^{-\frac{3}{4}}(\|E_{0}
 &\displaystyle\|_{L^{1}\cap L^{2}}+\|\nabla^{2}E_{0}\|)\\[2mm]
 &\displaystyle+(1+t)^{-\frac{9}{8}}(\|B_{0}\|_{L^{1}\cap L^{2}}+\|\nabla^{3}B_{0}\|),\\[2mm]
 \end{array}\\
 & \begin{array}{ll}
 \|B\| \lesssim (1+t)^{-\frac{5}{8}}\|u_{0}\|_{L^{1}\cap L^{2}}+(1+t)^{-\frac{9}{8}}(\|E_{0}&\displaystyle\|_{L^{1}\cap L^{2}}+\|\nabla^{3}E_{0}\|)\\[2mm]
 &\displaystyle +(1+t)^{-\frac{3}{8}}(\|B_{0}\|_{L^{1}\cap L^{2}}+\|\nabla B_{0}\|),\\[2mm]
 \end{array}\\
 &\begin{array}{ll}
 \|\nabla B\| \lesssim (1+t)^{-\frac{7}{8}}(\|u_{0}\|_{L^{1}\cap L^{2}}+\|\nabla u_0\|_{L^{2}})+(1+t)^{-\frac{11}{8}}(\|E_{0}&\displaystyle\|_{L^{1}\cap L^{2}}+\|\nabla^{4}E_{0}\|_{L^{2}})\\[2mm]
 &\displaystyle+(1+t)^{-\frac{5}{8}}(\|B_{0}\|_{L^{1}\cap L^{2}}+\|\nabla^{3}B_{0}\|)
 \end{array}\\
\end{aligned}\right.
\end{eqnarray}
for any $t\geq 0$.
\end{lemma}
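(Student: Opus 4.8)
The plan is to work entirely on the Fourier side, where the solution is given explicitly by $[\hat u_\perp,\hat E_\perp,\hat B]^T=\mathcal M(t,\xi)[\hat u_{0,\perp},\hat E_{0,\perp},\hat B_0]^T$, and to read off each $L^2$-decay rate from the pointwise bounds on the entries of $\mathcal M$ supplied by the preceding lemma. By Plancherel's theorem, $\|\partial^\gamma f(t)\|^2=\int_{\mathbb R^3}|\xi|^{2|\gamma|}|\hat f(t,\xi)|^2\,d\xi$ for any multi-index $\gamma$, so it suffices to estimate, component by component, integrals of the form $\int|\xi|^{2|\gamma|}|\mathcal M_{ij}(t,\xi)|^2|\hat f_{0,j}(\xi)|^2\,d\xi$. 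I would split $\mathbb R^3=D_0\cup D_1\cup D_\infty$ according to the three frequency regimes in the preceding lemma, estimate over each region separately, and finally retain the slowest resulting exponent in each column of $\mathcal M$.

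On the low-frequency region $D_0$ I would use the elementary bound $|\hat f_{0,j}(\xi)|\le\|f_{0,j}\|_{L^1}$ and reduce everything to the two model integrals $\int_{\mathbb R^3}|\xi|^{2a}e^{-c|\xi|^2t}\,d\xi\sim(1+t)^{-(3+2a)/2}$ and $\int_{\mathbb R^3}|\xi|^{2a}e^{-c|\xi|^4t}\,d\xi\sim(1+t)^{-(3+2a)/4}$, the second arising from the regularity-loss factor $e^{-O(1)|\xi|^4t}$ present in $\mathcal M$. For each matrix entry the slower of its two branches fixes the rate: the $O(1)$-coefficient term in the Gaussian branch $e^{-c|\xi|^2t}$ produces the $(1+t)^{-3/4}$ rates of $\|u_\perp\|$ and $\|E_\perp\|$, whereas the $O(1)$-coefficient term in the regularity-loss branch $e^{-c|\xi|^4t}$ produces the slow rate $\|B\|_{L^2(D_0)}\lesssim(1+t)^{-3/8}\|B_0\|_{L^1}$; entries carrying an extra factor $|\xi|$ (such as the $(u_\perp,B_0)$ and $(B,u_0)$ entries) lose a quarter power and give $(1+t)^{-5/8}$, while those with factor $|\xi|^3$ give $(1+t)^{-9/8}$.

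On the high-frequency region $D_\infty$ the decisive factor is the regularity-loss exponential $e^{-O(1)|\xi|^{-2}t}$, which I would handle by two complementary devices. For entries carrying a genuine negative power $|\xi|^{-2\ell}$ in front of $e^{-c|\xi|^{-2}t}$, the inequality $\sup_{|\xi|^2\ge L}|\xi|^{-2\ell}e^{-c|\xi|^{-2}t}\le C(1+t)^{-\ell}$ extracts the decay directly from the $L^2$-norm of the datum; for entries of size $O(1)\cdot e^{-c|\xi|^{-2}t}$ one instead trades decay for regularity through $e^{-c|\xi|^{-2}t}\le C_m|\xi|^{2m}t^{-m}$, which costs $\|\nabla^m f_{0,j}\|$ but yields the rate $(1+t)^{-m/2}$; in each case I would choose the smallest integer $m$ for which the high-frequency rate already dominates the matching low-frequency rate, which is exactly how the norms $\|\nabla^2E_0\|$, $\|\nabla^3E_0\|$, $\|\nabla^3B_0\|$ (and $\|\nabla^4E_0\|$, $\|\nabla^3B_0\|$ for $\nabla B$) enter the statement. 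The $e^{-c|\xi|^2t}$-branch over $D_\infty$, and the whole intermediate region $D_1$ where $|\mathcal M|\lesssim e^{-O(1)t}$, decay at least exponentially and are therefore harmless. The $\|\nabla B\|$ estimate is obtained in the same way after inserting the extra weight $|\xi|^2$ in the integrand, which shifts every $L^2$-exponent by $1/2$ and raises the required regularity indices accordingly.

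The main obstacle is the bookkeeping in the high-frequency regime: one must simultaneously track, for each of the nine entries of $\mathcal M$ and for both $B$ and $\nabla B$, the competition between the algebraic weight $|\xi|^{-2\ell}$, the regularity-loss exponential $e^{-c|\xi|^{-2}t}$, and the number of derivatives one is willing to spend, and then verify that the resulting rate never drops below the slower low-frequency rate carried by the $L^1$-datum. Once this matching is performed entry by entry and the three regions are summed, collecting the slowest surviving exponent in each column of $\mathcal M$ yields precisely the four decay estimates in \eqref{Co.5.2}.
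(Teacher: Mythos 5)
Your proposal is correct and follows essentially the same route as the paper, which does not write out a proof but simply invokes the ``standard procedure'' of Theorem 5.2 in Duan's work: Plancherel, the three-region splitting $D_0\cup D_1\cup D_\infty$, the $L^1$-bound on the Fourier transform at low frequency with the two model integrals for the $e^{-c|\xi|^2t}$ and $e^{-c|\xi|^4t}$ branches, and the trade of regularity for decay against $e^{-c|\xi|^{-2}t}$ at high frequency. The only blemish is a notational inconsistency in your high-frequency step (the displayed inequality $e^{-c|\xi|^{-2}t}\le C_m|\xi|^{2m}t^{-m}$ costs $\|\nabla^{2m}f_0\|$ and yields $t^{-m}$, whereas the rate $(1+t)^{-m/2}$ you quote corresponds to $e^{-c|\xi|^{-2}t}\le C_m|\xi|^{m}t^{-m/2}$ at the cost of $\|\nabla^{m}f_0\|$); the latter is the bookkeeping that reproduces the stated regularity indices.
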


\medskip
The time decay rate of the solution to the original linearized equation  \eqref{Linear}-\eqref{NLI} can be  immediately obtained  by combining Lemma \ref{lemma.decay1} and Lemma \ref{lemma.decay2}.

\begin{lemma}\label{lemma.decay3}
The solution $[n,u,\sigma,E,B]$ to the Cauchy problem \eqref{Linear}-\eqref{NLI} have the following time-decay property:
\begin{eqnarray}
 &&\left\{
\begin{aligned}
&\begin{array}{ll}
\|n\|\lesssim (1+t)^{-\frac{5}{4}}\|[n_{0},u_{0},\sigma_{0}]\|_{L^{1}
\cap L^{2}}+(1+t)^{-\frac{5}{4}}\|E_{0}\|_{L^{1}},\\[2mm]
\end{array}\\
&\begin{array}{ll}
\|u\|\lesssim
(1+t)^{-\frac{3}{4}}\|[n_{0},u_{0},\sigma_{0},E_{0}]\|
_{L^{1}\cap L^{2}}+(1+t)^{-\frac{5}{8}}\|B_{0}\|_{L^{1}\cap L^{2}},\\[2mm]
\end{array}\\
&\begin{array}{ll}
\| \sigma \| \lesssim (1+t)^{-\frac{3}{4}}
\|[n_{0},u_{0},\sigma_{0}]\|_{L^{1}\cap L^{2}},\\[2mm]
\end{array}\\
&\begin{array}{ll}
\|E\|\lesssim (1+t)^{-\frac{3}{4}}\|[n_{0},u_{0},\sigma_{0},E_{0}]\|_{L^{1}\cap L^{2}}
+(1+&\displaystyle t)^{-\frac{3}{4}}
\|\nabla^{2}E_{0}\|\\[2mm]
&+(1+t)^{-\frac{9}{8}}(\|B_{0}\|_{L^{1}\cap L^{2}} +\|\nabla^{3}B_{0}\|),\\[2mm]
\end{array}\\
&\begin{array}{ll}
\|B\|\lesssim (1+t)^{-\frac{5}{8}}\|u_{0}\|_{L^{1}\cap L^{2}}
+(1+t)^{-\frac{9}{8}}&\displaystyle(\|E_{0}\|_{L^{1}\cap L^{2}} +\|\nabla^{3}E_{0}\|)\\[2mm]
&+(1+t)^{-\frac{3}{8}}(\|B_{0}\|_{L^{1}\cap L^{2}} +\|\nabla B_{0}\|)
\end{array}\\
\end{aligned}\right.
\end{eqnarray}
for any $t\geq 0$.
\end{lemma}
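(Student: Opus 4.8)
The plan is to exploit the decomposition \eqref{decompo}, which routes $n$ and $\sigma$ entirely into the fluid part, routes $B$ entirely into the electromagnetic part, and splits the two remaining unknowns as $u=u_{\parallel}+u_{\perp}$ and $E=E_{\parallel}+E_{\perp}$. Since the subsystems \eqref{Fluid1} and \eqref{magnetic1} are decoupled, the strategy is simply to estimate each component by invoking the two preceding lemmas and, for $u$ and $E$, to add the two contributions via the triangle inequality. No new spectral work is needed: the analysis is already contained in Lemma \ref{lemma.decay1} and Lemma \ref{lemma.decay2}.

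Concretely, I would first read off $\|n(t)\|$ and $\|\sigma(t)\|$ directly from the $k=0$ case of Lemma \ref{lemma.decay1}, and $\|B(t)\|$ directly from Lemma \ref{lemma.decay2}; these three need no recombination. For the velocity I would write $\|u(t)\|\leq\|u_{\parallel}(t)\|+\|u_{\perp}(t)\|$, bounding the first term by the $k=0$ case of Lemma \ref{lemma.decay1} and the second by Lemma \ref{lemma.decay2}; retaining only the slowest surviving rates gives $(1+t)^{-3/4}$ on the group $[n_{0},u_{0},\sigma_{0},E_{0}]$ and $(1+t)^{-5/8}$ on $B_{0}$, which is exactly the claimed bound. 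The electric field is handled identically through $\|E(t)\|\leq\|E_{\parallel}(t)\|+\|E_{\perp}(t)\|$, where $E_{\parallel}=(-\Delta)^{-1}\nabla n$ is estimated in Lemma \ref{lemma.decay1} and $E_{\perp}$ in Lemma \ref{lemma.decay2}; collecting terms produces the $(1+t)^{-3/4}$ contributions from $[n_{0},u_{0},\sigma_{0},E_{0}]$ and from $\nabla^{2}E_{0}$, together with the faster $(1+t)^{-9/8}$ contribution from $B_{0}$.

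The one point that requires justification, and the only place where care is needed, is the replacement of the projected data $u_{0,\parallel}$ and $u_{0,\perp}$ appearing in Lemmas \ref{lemma.decay1}--\ref{lemma.decay2} by the full datum $u_{0}$ in the final statement. For the $L^{2}$ pieces this is immediate, since the Helmholtz projections $-(-\Delta)^{-1}\nabla\nabla\cdot$ and $(-\Delta)^{-1}\nabla\times(\nabla\times\cdot)$ are norm-nonincreasing on $L^{2}$. For the $L^{1}$ pieces the projections are not bounded on $L^{1}$, but these $L^{1}$ norms enter the estimates only through the low-frequency bound $\|\hat{f}\|_{L^{\infty}}\leq\|f\|_{L^{1}}$, and the pointwise identity $\hat{u}_{0,\parallel}(\xi)=\tilde{\xi}(\tilde{\xi}\cdot\hat{u}_{0}(\xi))$ gives $|\hat{u}_{0,\parallel}(\xi)|\leq|\hat{u}_{0}(\xi)|\leq\|u_{0}\|_{L^{1}}$, with the analogous bound for $u_{0,\perp}$. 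Hence every occurrence of $\|u_{0,\parallel}\|_{L^{1}}$ or $\|u_{0,\perp}\|_{L^{1}}$ may be replaced by $\|u_{0}\|_{L^{1}}$ without loss. I expect the final collection of rates — keeping, for each group of initial data, only the slowest power of $(1+t)$ — to be the only mildly delicate bookkeeping step; no genuine analytic obstacle remains.
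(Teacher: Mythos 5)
Your proposal is correct and follows exactly the route the paper takes: the paper simply states that Lemma \ref{lemma.decay3} is ``immediately obtained'' by combining Lemma \ref{lemma.decay1} and Lemma \ref{lemma.decay2} through the decomposition \eqref{decompo}, which is precisely your triangle-inequality argument. Your extra remark justifying the replacement of $u_{0,\parallel}$, $u_{0,\perp}$ by $u_{0}$ (via $L^{2}$-boundedness of the Helmholtz projections and the pointwise Fourier bound $|\hat{u}_{0,\parallel}(\xi)|\leq|\hat{u}_{0}(\xi)|\leq\|u_{0}\|_{L^{1}}$ for the $L^{1}$ pieces) is a detail the paper leaves implicit, and it is handled correctly.
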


\section{Asymptotic behavior of the nonlinear system}\label{sec5}

\subsection{Global existence}
In this section, we will establish the global existence of solution to the compressible non-isentropic Navier-Stokes-Maxwell system~\eqref{2.5}-\eqref{NC}.  For later use and clear reference, the following sobolev inequality about the $L^p$ estimate on any two
product terms with the sum of the order of their derivatives equal
to a given integer is listed as follows \cite{DRZ,Taylor}.
\begin{lemma}\label{s5.le1}
Let $n\geq 1$. Let
$\alpha^1=\left(\alpha_1^1,\cdots,\alpha_n^1\right)$ and
$\alpha^2=\left(\alpha_1^2,\cdots,\alpha_n^2\right)$ be two
multi-indices with $\left|\alpha^1\right|=k_1,
\left|\alpha^2\right|=k_2$ and set $k=k_1+k_2$. Let $1\leq
p,q,r\leq\infty$ with $1/p=1/q+1/r$. Then, for $u_j: \R^n\to \R$
$(j=1,2)$, one has
\begin{equation}\label{s5.eq1}
\left\|\partial^{\alpha^1}u_1\partial^{\alpha^2}u_2\right\|_{L^p\left(\mathbb{R}^n\right)}
\leq C\left(\left\|u_1\right\|_{L^q\left(\mathbb{R}^n\right)}
\left\|\nabla^ku_2\right\|_{L^r\left(\mathbb{R}^n\right)}
+\left\|u_2\right\|_{L^q\left(\mathbb{R}^n\right)}
\left\|\nabla^ku_1\right\|_{L^r\left(\mathbb{R}^n\right)}\right)
\end{equation}
for a constant $C$ independent of $u_1$ and $u_2$.
\end{lemma}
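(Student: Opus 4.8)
The plan is to combine Hölder's inequality with the Gagliardo--Nirenberg interpolation inequality. First I would dispose of the degenerate cases $k_1=0$ or $k_2=0$: if, say, $k_1=0$, then $\alpha^1=0$ and $\partial^{\alpha^1}u_1=u_1$, so a single application of Hölder with the exponent split $1/p=1/q+1/r$, together with the pointwise bound $|\partial^{\alpha^2}u_2|\le|\nabla^k u_2|$, yields the first term on the right-hand side directly. Hence I assume $k_1,k_2\ge 1$ from now on, which in particular forces $\theta_i:=k_i/k<1$.

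Next I would introduce intermediate exponents $p_1,p_2$ defined by
\begin{equation*}
\frac{1}{p_1}=\frac{k_1}{k}\cdot\frac{1}{r}+\frac{k_2}{k}\cdot\frac{1}{q},\qquad
\frac{1}{p_2}=\frac{k_2}{k}\cdot\frac{1}{r}+\frac{k_1}{k}\cdot\frac{1}{q},
\end{equation*}
and observe that $1/p_1+1/p_2=1/q+1/r=1/p$. Applying Hölder with this split, together with $|\partial^{\alpha^i}u_i|\le|\nabla^{k_i}u_i|$, reduces the estimate to bounding $\|\nabla^{k_1}u_1\|_{L^{p_1}}$ and $\|\nabla^{k_2}u_2\|_{L^{p_2}}$ separately.

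Setting $\theta_i=k_i/k$, so that $\theta_1+\theta_2=1$ and $1-\theta_1=\theta_2$, I would then apply Gagliardo--Nirenberg to each factor in the form
\begin{equation*}
\|\nabla^{k_i}u_i\|_{L^{p_i}}\le C\,\|u_i\|_{L^q}^{1-\theta_i}\,\|\nabla^{k}u_i\|_{L^r}^{\theta_i}.
\end{equation*}
The key algebraic point is that the dimensional terms in the scaling relation cancel precisely for this choice of $\theta_i$: the contributions $k_i/n$ and $-\theta_i\,k/n$ add to zero, so the scaling identity collapses exactly to the definition of $1/p_i$ above. Writing $X=\|u_1\|_{L^q}\|\nabla^k u_2\|_{L^r}$ and $Y=\|u_2\|_{L^q}\|\nabla^k u_1\|_{L^r}$, the product of the two interpolation bounds is, after reordering factors, exactly $X^{\theta_2}Y^{\theta_1}$; by the weighted Young inequality this is $\le\theta_2 X+\theta_1 Y\le X+Y$, which is the claimed right-hand side.

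The step I expect to require the most care is verifying that the Gagliardo--Nirenberg inequality is genuinely admissible for the prescribed $q,r$ and the resulting $p_i$. The helpful feature here is that $k_1,k_2\ge 1$ forces $\theta_i=k_i/k<1$ strictly, keeping us away from the endpoint $\theta=1$ at which GN is known to fail when $k-k_i-n/r$ is a nonnegative integer; the remaining admissibility conditions, as well as the endpoint behaviour when $q$ or $r$ equals $\infty$, then follow from the standard statement in \cite{Taylor}. Once the applicability of GN is secured, the remaining ingredients---Hölder's inequality, the exponent bookkeeping for $p_1,p_2$, and the weighted Young inequality---are entirely routine.
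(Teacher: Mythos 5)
The paper does not prove this lemma at all---it is quoted directly from \cite{DRZ} and \cite{Taylor}---so there is no internal proof to compare against. Your argument (Hölder with the exponents $1/p_i=\theta_i/r+(1-\theta_i)/q$, Gagliardo--Nirenberg at the lower endpoint $\theta_i=k_i/k$, then the weighted Young inequality on $X^{\theta_2}Y^{\theta_1}$) is precisely the standard proof found in those references; the exponent bookkeeping, the separate treatment of the degenerate cases $k_1=0$ or $k_2=0$, and the observation that $k_1,k_2\ge 1$ keeps $\theta_i$ strictly below $1$ and hence away from the exceptional Gagliardo--Nirenberg endpoint are all correct.
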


To the end, we  assume the integer $N \geq 4$. For
$U=[n,u,\sigma,E,B]$, we define the full instant energy functional
$\mathcal {E}_{N}(U(t))$ and  the high-order instant
%energy functional $\mathcal {E}_{N}^{h}(U(t))$, and the dissipation
%rates $\mathcal {D}_{N}(U(t))$, $\mathcal {D}_{N}^{h}(U(t))$
by
\begin{equation}\label{de.E}
\arraycolsep=1.5pt
\begin{array}{rl}
\mathcal {E}_{N}(U(t))=&\displaystyle
\|[n,u,\sigma,E,B]\|_{N}^{2}+\kappa_{1}\sum_{|\alpha|\leq
N-1}\langle\partial^{\alpha}u,\partial^{\alpha}\nabla n
\rangle\\[3mm]
&\displaystyle+\kappa_{1}\sum_{|\alpha|\leq N-2}\langle
\partial^{\alpha}\nabla\times u,
\partial^{\alpha}\nabla\times E\rangle-\kappa_{1}\kappa_{2}\sum_{1\leq |\alpha|\leq
N-2}\left\langle
\partial^{\alpha}E,\partial^{\alpha}\nabla \times B\right\rangle,
\end{array}
\end{equation}

and
\begin{equation}\label{de.Eh}
\arraycolsep=1.5pt
\begin{array}{rl}
\mathcal {E}^{h}_{N}(U(t))=&\displaystyle
\|\nabla[n,u,\sigma,E,B]\|_{N-1}^{2}+\kappa_{1}\sum_{1\leq
|\alpha|\leq N-1}\langle\partial^{\alpha}u,\partial^{\alpha}\nabla n
\rangle\\[3mm]
&\displaystyle+\kappa_{1}\sum_{1\leq |\alpha|\leq N-2}\langle
\partial^{\alpha}\nabla\times u,
\partial^{\alpha}\nabla\times E\rangle-\kappa_{1}\kappa_{2}\sum_{2\leq |\alpha|\leq
N-2}\left\langle
\partial^{\alpha}E,\partial^{\alpha}\nabla \times B\right\rangle,
\end{array}
\end{equation}
respectively, where $0<\kappa_{2},\ \kappa_{1}\ll 1$ are constants
to be properly chosen later on. Notice that since all constants
$\kappa_i$ $(i=1,2)$ are small enough, one has
\begin{equation*}
    \mathcal {E}_{N}(U(t))\sim
\|[n,u,\sigma,E,B] \|_{N}^{2},\quad \mathcal {E}_{N}^{h}(U(t))\sim
\|\nabla [n,u,\sigma,E,B] \|_{N-1}^{2}.
\end{equation*}
We further define the corresponding dissipation rates $\mathcal
{D}_{N}(U(t))$, $\mathcal {D}_{N}^{h}(U(t))$ by
\begin{equation}\label{de.D}
\mathcal {D}_{N}(U(t))=\displaystyle \|\nabla
u\|_{N}^{2}+\|n\|_{N}^{2}+\|\nabla \sigma\|_{N}^2 +\|\nabla
E\|_{N-2}^{2}+\|\nabla^{2}B\|_{N-3}^{2},
\end{equation}
and
\begin{equation}\label{de.Dh}
\mathcal {D}_{N}^{h}(U(t))=\displaystyle \|\nabla
^2u\|_{N-1}^{2}+\|\nabla n\|_{N-1}^{2}+\|\nabla^2\sigma\|_{N-1}^2
+\|\nabla ^{2}E\|_{N-3}^{2}+\|\nabla^{3}B\|_{N-4}^{2},
\end{equation}
respectively. Then, the global existence of  the reformulated Cauchy
problem \eqref{2.5}-\eqref{sec5.ggg} with small smooth initial data
can be stated as follows.

\begin{theorem}\label{th2}
Let $N\geq 4$. Assume that the compatibility condition \eqref{NC} holds true for  the initial data $U_{0}=[n_{0},u_{0},\sigma_{0},E_{0},B_{0}]$. If $\mathcal {E}_{N}(U_{0})>0$
is small enough,  the Cauchy problem
$\eqref{2.5}$-$\eqref{sec5.ggg}$ admits a unique global solution
$U=[n,u, \sigma,E,B]$ satisfying
\begin{eqnarray*}
\begin{aligned}
&[n(t,x),u(t,x),\sigma(t,x),E(t,x),B(t,x)]\in
C([0,\infty);H^{N}(\mathbb{R}^{3})),\\
&n\in L^{2}([0,\infty);H^{N }(\mathbb{R}^{3})),\\
&\nabla u \in L^{2}([0,\infty);H^{N}(\mathbb{R}^{3})),\ \ \ \nabla\sigma  \in L^{2}([0,\infty);H^{N}(\mathbb{R}^{3})),\\
&\nabla E\in L^{2}([0,\infty);H^{N-2}(\mathbb{R}^{3})),\ \ \ \nabla^{2} B\in L^{2}([0,\infty);H^{N-3}(\mathbb{R}^{3})),
\end{aligned}
\end{eqnarray*}
and
\begin{eqnarray}\label{energy.ineq}
\mathcal {E}_{N}(U(t))+\lambda\int_{0}^{t}\mathcal
{D}_{N}(U(s))ds\leq \mathcal {E}_{N}(U_{0}),
\end{eqnarray}
for any $t\geq 0$.
\end{theorem}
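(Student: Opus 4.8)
The plan is to establish the a priori energy estimate \eqref{energy.ineq} via a carefully weighted energy method, then close the global existence by the standard continuity argument combined with local existence. The core of the matter is constructing the Lyapunov-type functional $\mathcal{E}_N(U(t))$ in \eqref{de.E} so that its time derivative is controlled by the full dissipation rate $\mathcal{D}_N(U(t))$ in \eqref{de.D}. Because of the regularity-loss structure noted in the introduction (the top-order magnetic field is not time-space integrable), the dissipation $\mathcal{D}_N$ deliberately loses derivatives on $E$ and $B$ relative to $\mathcal{E}_N$: $\nabla E$ is controlled only in $H^{N-2}$ and $\nabla^2 B$ only in $H^{N-3}$. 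This asymmetry must be respected throughout.

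First I would perform the basic $L^2$ and high-order energy estimates: apply $\partial^\alpha$ for $|\alpha|\le N$ to the system \eqref{2.5}, pair with the corresponding component of $\partial^\alpha U$, and integrate by parts. The linear terms produce the natural dissipation $\mu\|\nabla u\|_N^2 + (\mu+\mu')\|\nabla\cdot u\|_N^2$ from the velocity and $\bar\kappa\|\nabla\sigma\|_N^2$ from the temperature, while the Lorentz coupling $\langle\partial^\alpha E,\partial^\alpha u\rangle$ and the Maxwell curl terms cancel in pairs by the antisymmetry of $\nabla\times$. This step yields control of $\|\nabla u\|_N$, $\|\nabla\sigma\|_N$ but crucially produces \emph{no} dissipation for $n$, $E$, or $B$ directly. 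The nonlinear source terms $h_1,\dots,h_4$ are then estimated using Lemma~\ref{s5.le1} together with Sobolev embedding, and under the smallness assumption $\mathcal{E}_N(U)\ll 1$ they are absorbed as $\sqrt{\mathcal{E}_N}\,\mathcal{D}_N$.

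Next I would introduce the three interaction (correction) functionals appearing in \eqref{de.E}. The term $\kappa_1\sum\langle\partial^\alpha u,\partial^\alpha\nabla n\rangle$ is designed so that differentiating it in time and using $\partial_t n = -\nabla\cdot u + h_1$ and the momentum equation recovers the missing dissipation $\|n\|_N^2$ (the key contribution comes from the $\alpha_1\nabla n$ and the Poisson-type $E$ term $\nabla\cdot E = -n$). The term $\kappa_1\sum\langle\partial^\alpha\nabla\times u,\partial^\alpha\nabla\times E\rangle$ produces dissipation for $\nabla\times E$, i.e. $\|\nabla E\|_{N-2}^2$, and the term $-\kappa_1\kappa_2\sum\langle\partial^\alpha E,\partial^\alpha\nabla\times B\rangle$ produces $\|\nabla^2 B\|_{N-3}^2$ — this last one is exactly where the regularity loss is felt, forcing the restricted index ranges. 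Choosing $0<\kappa_2\ll\kappa_1\ll1$ in the right hierarchical order ensures each correction term's bad contributions are dominated by the dissipation already gained at the previous level, while the correction functionals themselves remain small relative to $\|U\|_N^2$, giving $\mathcal{E}_N(U)\sim\|U\|_N^2$.

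Combining these, I would obtain a differential inequality $\frac{d}{dt}\mathcal{E}_N(U(t)) + \lambda\mathcal{D}_N(U(t)) \le C\sqrt{\mathcal{E}_N(U(t))}\,\mathcal{D}_N(U(t))$, whence for small data the nonlinear term is absorbed and integration in time yields \eqref{energy.ineq}. The local existence follows from a standard iteration/fixed-point scheme in $H^N$, and the continuity argument upgrades it to a global solution once the a priori bound keeps $\mathcal{E}_N$ below the smallness threshold. \textbf{The main obstacle} I expect is the correct bookkeeping of the curl-coupling estimates for the electromagnetic part: getting the Maxwell interaction terms to generate dissipation of $E$ and $B$ necessarily costs derivatives, so one must verify that the dissipation rate $\mathcal{D}_N$ genuinely closes the estimate despite controlling $E$ and $B$ in lower-order norms than $\mathcal{E}_N$, and that the hierarchy of small constants $\kappa_1,\kappa_2$ can be chosen consistently at every derivative level. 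The zero-order estimates for $u$ and $\sigma$, which are not time-space integrable, require particular care and rely on the structure already exploited in the corrections rather than on direct dissipation.
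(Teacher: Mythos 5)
Your proposal is correct and follows essentially the same route as the paper: a basic $H^N$ energy estimate exploiting the cancellation of the Lorentz and curl couplings, followed by exactly the three interaction functionals of \eqref{de.E} (recovering $\|n\|_N^2$ via the Poisson relation $\nabla\cdot E=-n$, then $\|\nabla E\|_{N-2}^2$ via $\nabla\times E$, then $\|\nabla^2 B\|_{N-3}^2$ via $\nabla\times B$), a hierarchical choice of $\kappa_1,\kappa_2$, absorption of the nonlinearities by Lemma~\ref{s5.le1} and smallness, and the standard continuity argument. Your identification of the regularity-loss bookkeeping for $E$ and $B$ as the delicate point matches the paper's Steps 3--4.
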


\begin{proof}We only  need to prove the following uniform-in-time a priori estimate.  For any smooth solution $U$ to the Cauchy problem
$\eqref{2.5}$-$\eqref{sec5.ggg}$ over $0\leq t \leq T$ with $T>0$, one has,
\begin{eqnarray}\label{PrioriE}
\frac{d}{dt}\mathcal {E}_{N}(U(t))+ c\mathcal
{D}_{N}(U(t)) \leq  \left(\mathcal {E}_{N}(U(t))+\mathcal {E}_{N}^{1/2}(U(t))\right)\mathcal
{D}_{N}(U(t)).
\end{eqnarray}
As long as the above estimate is proved, Theorem $\eqref{th2}$ follows in a standard way by combining it with the local-in-time existence and uniqueness as well as the continuity argument. Therefore, in what follows, we only prove \eqref{PrioriE} and other details are omitted for simplicity.  The proof can be divided into  four steps as follows.

\medskip
\noindent{\bf Step 1.}Denote  $\alpha_{1}=P_{\rho}(1,1),\
\alpha_{2}=P_{\theta}(1,1),\
\alpha_{3}=\frac{P_{\theta}(1,1)}{c_{\nu}},\
\bar{\kappa}=\frac{\kappa}{c_{\nu}}$. Zero-order estimate to the equations \eqref{2.5} imply that
\begin{equation}\label{3.3}
\begin{aligned}
&\frac{1}{2}\frac{d}{dt}\left(\alpha_{1}\|n\|_{N}^{2}+\|u\|_{N}^{2}+\frac{\alpha_{2}}{\alpha_{3}}\|\sigma\|_{N}^{2}+\|[E,B]\|_{N}^{2}\right)+\mu\|\nabla
u\|_{N}^{2}+(\mu+\mu')\|\nabla \cdot
u\|_{N}^{2}+\frac{\bar{\kappa}\alpha_{2}}{\alpha_{3}}\|\nabla
\sigma\|_{N}^{2}\\
=&\sum_{|\alpha|\leq
N}\left(\alpha_{1}\langle\partial^{\alpha}h_{1},\partial^{\alpha}
n\rangle+\langle\partial^{\alpha}h_{2},\partial^{\alpha}
u\rangle+\frac{\alpha_{2}}{\alpha_{3}}\langle\partial^{\alpha}h_{3},\partial^{\alpha}
\sigma\rangle+\langle\partial^{\alpha}h_{4},\partial^{\alpha}
E\rangle\right).
 \end{aligned}
\end{equation}

In what follows, we estimate the four terms on the  right-hand side. Recall that
\begin{equation*}
\arraycolsep=1.5pt \left\{
 \begin{aligned}
 &h_{1}=-\nabla\cdot(n u)=-u\cdot \nabla n-n\nabla\cdot u,\\
 &h_{2}\thicksim u\cdot \nabla u +(n+\sigma)\nabla n+(n+\sigma)\nabla \sigma+n\Delta u+n\nabla \nabla \cdot u+u\times B,\\
 &h_{3}\thicksim u\cdot \nabla \sigma +(n+\sigma)\nabla \cdot u+n\Delta \sigma +|\nabla u+ (\nabla u)'|^2+(\nabla \cdot u)^2.
\end{aligned}\right.
\end{equation*}
When $|\alpha|=0$, because of the similarity of these terms,  we only list some estimates for the clear reference.
\begin{equation*}
\arraycolsep=1.5pt
 \begin{aligned}
 &|\langle u\cdot \nabla n,n\rangle|\leq \|u\|_{L^{\infty}}\|\nabla n\|_{L^{2}}\| n\|_{L^{2}} \leq C\| n\|_{L^{2}} ( \|\nabla u\|_{H^{1}}^2+\|\nabla n\|^{2}_{L^{2}}),\\
 &\langle u\times B,u\rangle=0,\\
 & |\langle n\cdot \Delta u,u\rangle|\leq   \|n\|_{L^{\infty}}\|\Delta u\|_{L^{2}}\| u\|_{L^{2}} \leq C\| u\|_{L^{2}} ( \|\Delta u\|_{L^{2}}^2+\|\nabla n\|^{2}_{H^{1}}),\\
 &|\langle nu,E\rangle|\leq \|E\|\|u\|_{L^{\infty}}\|n\|\leq \|E\|_{L^{2}}\left(\|\nabla u\|_{H^{1}}^2+\|n\|^{2}\right).
\end{aligned}
\end{equation*}
 When $1\leq |\alpha|\leq N$ and set $|\alpha|=k$. We only estimate the following five terms, and other terms can be estimated in a similar way.

 For the terms containing $(N+1)$-order derivatives of $n$ or $(N+2)$-order derivatives of $u$, one can estimate them after integration by parts with respect to $x$.
\begin{equation*}
\arraycolsep=1.5pt
 \begin{aligned}
   \langle \partial^{\alpha}(u\cdot \nabla n),\partial^{\alpha}n\rangle
 &=\langle u\cdot \partial^{\alpha}\nabla n,\partial^{\alpha}n\rangle+\sum_{\beta<\alpha}C_{\alpha}^{\beta} \langle\partial^{\alpha-\beta}u\cdot \partial^{\beta}\nabla n,\partial^{\alpha}n\rangle\\
 &=-\frac{1}{2}\langle \nabla\cdot u,|\partial^{\alpha}n|^{2}\rangle+\sum_{\beta<\alpha}C_{\alpha}^{\beta} \langle\partial^{\alpha-\beta}u\cdot \partial^{\beta}\nabla n,\partial^{\alpha}n\rangle\\
 &\lesssim \|\nabla u\|_{N-1}\|\nabla n\|^{2}_{N-1}+\left(\|\nabla u\|_{L^{\infty}}\|\nabla^{k}n\|_{L^{2}} +\|\nabla n\|_{L^{\infty}}\|\nabla^{k}u\|_{L^{2}}\right)\|\nabla^{k}n\|_{L^{2}}\\
 &\lesssim \|\nabla u\|_{N-1}\|\nabla n\|^{2}_{N-1},
 \end{aligned}
\end{equation*}
 \begin{equation*}
\arraycolsep=1.5pt
 \begin{aligned}
   \langle \partial^{\alpha}(n \Delta u),\partial^{\alpha}u\rangle
 =&\langle n \partial^{\alpha}\Delta u,\partial^{\alpha}u\rangle+\sum_{\beta<\alpha}C_{\alpha}^{\beta} \langle\partial^{\alpha-\beta}n\cdot \partial^{\beta}\Delta u,\partial^{\alpha}u\rangle\\
= &-\langle \nabla n \partial^{\alpha}\nabla u,\partial^{\alpha}u\rangle-\langle n \partial^{\alpha}\nabla u,\partial^{\alpha}\nabla u\rangle +\sum_{\beta<\alpha}C_{\alpha}^{\beta} \langle\partial^{\alpha-\beta}n\cdot \partial^{\beta}\Delta u,\partial^{\alpha}u\rangle\\
 \lesssim & \|\nabla^2u\|_{N-1}\|\nabla u\|_{N-1}\|\nabla n\|_{N-1}+\|\nabla n\|_{N-1}\|\nabla^2u\|_{N-1}^2\\
 &+\left(\|\Delta u\|_{L^{\infty}}\|\nabla^{k}n\|_{L^{2}} +\|\nabla n\|_{L^{\infty}}\|\nabla^{k-1}\Delta u\|_{L^{2}}\right)\|\nabla^{k}u\|_{L^{2}}\\
\lesssim  &\|\nabla^2u\|_{N-1}\|\nabla u\|_{N-1}\|\nabla n\|_{N-1}+\|\nabla n\|_{N-1}\|\nabla^2u\|_{N-1}^2,
 \end{aligned}
\end{equation*}
and for the term containing $B$, one has
 \begin{equation*}
\arraycolsep=1.5pt
 \begin{aligned}
   \langle \partial^{\alpha}(u \times B),\partial^{\alpha}u\rangle
 =&\langle  \partial^{\alpha} u \times B,\partial^{\alpha}u\rangle+\sum_{\beta<\alpha}C_{\alpha}^{\beta} \langle\partial^{\beta} u\times \partial^{\alpha-\beta}B,\partial^{\alpha}u\rangle\\
= &\sum_{\beta<\alpha}C_{\alpha}^{\beta} \langle\partial^{\beta} u\times \partial^{\alpha-\beta}B,\partial^{\alpha}u\rangle\\
 \lesssim &\left(\|\nabla B\|_{L^{\infty}}\|\nabla^{k-1}u\|_{L^{2}} +\|u\|_{L^{\infty}}\|\nabla^{k}B\|_{L^{2}}\right)\|\nabla^{k}u\|_{L^{2}}\\
\lesssim  &\|u\|_{N-1}\|\nabla^2 B\|_{N-3}\|\nabla u\|_{N-1}+\|\nabla B\|_{N-1}\|\nabla u\|_{N-1}^2.
 \end{aligned}
\end{equation*}
For other terms, we can directly estimate them by using Lemma \ref{s5.le1} as follows.
\begin{equation*}
\arraycolsep=1.5pt
 \begin{aligned}
   \langle \partial^{\alpha}(u\cdot \nabla \sigma),\partial^{\alpha}\sigma\rangle
 &\lesssim  \left(\|  u\|_{L^{\infty}}\|\nabla^{k}\nabla \sigma\|_{L^{2}} +\|\nabla \sigma\|_{L^{\infty}}\|\nabla^{k}u\|_{L^{2}}\right)\|\nabla^{k}\sigma\|_{L^{2}}\\
 &\lesssim \|\nabla u\|_{N-1}\|\nabla^{2} \sigma\|_{N-1}\|\nabla \sigma\|_{N-1},
 \end{aligned}
\end{equation*}

\begin{equation*}
\arraycolsep=1.5pt
 \begin{aligned}
   \langle \partial^{\alpha}(nu),\partial^{\alpha}E\rangle
 \lesssim &\left(\|n\|_{L^{\infty}}\|\nabla^{k}u\|_{L^{2}} +\|u\|_{L^{\infty}}\|\nabla^{k}n\|_{L^{2}}\right)\|\nabla^{k}E\|_{L^{2}}\\
\lesssim  &\|\nabla u\|_{N-1}\|\nabla n\|_{N-1}\|\nabla E\|_{N-1}.
 \end{aligned}
\end{equation*}
By collecting the above estimates, one has
\begin{equation}\label{3.3*}
\begin{aligned}
&\frac{1}{2}\frac{d}{dt}\left(\alpha_{1}\|n\|_{N}^{2}+\|u\|_{N}^{2}+\frac{\alpha_{2}}{\alpha_{3}}\|\sigma\|_{N}^{2}+\|[E,B]\|_{N}^{2}\right)+\mu\|\nabla
u\|_{N}^{2}+(\mu+\mu')\|\nabla \cdot
u\|_{N}^{2}+\frac{\bar{\kappa}\alpha_{2}}{\alpha_{3}}\|\nabla
\sigma\|_{N}^{2}\\
\leq & C\mathcal {E}_{N}^{1/2}(U(t))\mathcal
{D}_{N}(U(t)).
 \end{aligned}
\end{equation}

\medskip
\noindent{\bf Step 2.}
 It holds that
\begin{eqnarray}\label{step2}
  \begin{aligned}
  &\frac{d}{dt}\sum_{|\alpha|\leq
N-1}\langle
\partial^{\alpha}u,
\partial^{\alpha}\nabla n\rangle
  +\lambda \|n\|_{N}^{2}\\
  \leq &
  C\|\nabla [u,\sigma]\|_{N}^{2}+\mathcal {E}_{N}(U(t))\mathcal
{D}_{N}(U(t)).
  \end{aligned}
\end{eqnarray}

In fact, recall the first two equations in \eqref{2.5},
\begin{equation}\label{step2equ}
\left\{
  \begin{aligned}
  &\partial_t n+\nabla\cdot u=h_{1},\\
  &\partial_t u+P_{\rho}(1,1)\nabla n+P_{\theta}(1,1)\nabla \sigma+E-\mu\Delta u-(\mu+\mu')\nabla\nabla \cdot u=h_{2},
  \end{aligned}
  \right.
  \end{equation}
with
\begin{equation*}
\arraycolsep=1.5pt \left\{
 \begin{aligned}
 &h_{1}=-\nabla\cdot(n u),\\
 &h_{2}\thicksim u\cdot \nabla u +(n+\sigma)\nabla n+(n+\sigma)\nabla \sigma+n\Delta u+n\nabla \nabla \cdot u+u\times B.
\end{aligned}\right.
\end{equation*}

Let $0\leq |\alpha|\leq N-1$, applying $\partial^{\alpha}$ to the
second equation of $(\ref{step2equ})$, multiplying it by
$\partial^{\alpha}\nabla n$, taking integrations in $x$, using
integration by parts and also the final equation of \eqref{2.5},
replacing $\partial_{t}n$ from $\eqref{step2equ}_{1}$, one has
\begin{equation*}
 \begin{aligned}
 &\frac{d}{dt}\langle
\partial^{\alpha}u,
\partial^{\alpha}\nabla n\rangle
+\alpha_{1}\|\partial^{\alpha}\nabla
n\|^{2}+\left\|\partial^{\alpha}n
\right\|^2\\
=&-\left\langle\alpha_{2}\partial^{\alpha}\nabla
\sigma,\partial^{\alpha}\nabla n\right\rangle
+\langle\mu \partial^{\alpha}\Delta u,\partial^{\alpha}\nabla n\rangle+\langle(\mu+\mu') \partial^{\alpha}\nabla \nabla \cdot u,\partial^{\alpha}\nabla n\rangle\\
&+\langle\partial^{\alpha}h_{2},\partial^{\alpha}\nabla n\rangle
+\|\partial^{\alpha}\nabla \cdot
u\|^{2}-\langle\partial^{\alpha}h_{1},\partial^{\alpha}\nabla \cdot
u\rangle.
 \end{aligned}
\end{equation*}
Then, it follows from the Cauchy-Schwarz inequality that
\begin{equation}\label{ineq.2}
 \begin{aligned}
 &\frac{d}{dt}\langle
\partial^{\alpha}u,
\partial^{\alpha}\nabla n\rangle
+\lambda(\|\partial^{\alpha}\nabla n\|^{2}+\left\|\partial^{\alpha}n
\right\|)^2\\
\leq& C(\|\nabla u\|_{N}^{2}+\|\nabla
\sigma\|_{N}^{2})+C\sum_{|\alpha|\leq
N-1}\|\partial^{\alpha}[h_{1},h_{2}]\|^{2}.
\end{aligned}
\end{equation}
Noticing that  $h_{1},\ h_{2}$ are quadratically
nonlinear,  by using the Leibniz formula and Lemma \ref{s5.le1} one has
\begin{equation*}
\|\partial^{\alpha}h_{1}\|^{2}+\|\partial^{\alpha}h_{2}\|^{2}\leq
C \mathcal {E}_{N}(U(t))\mathcal
{D}_{N}(U(t)).
\end{equation*}
Substituting this into (\ref{ineq.2}) and taking
the summation over $|\alpha|\leq N-1$ implies (\ref{step2}).

\medskip
\noindent{\bf Step 3.} It holds that
\begin{eqnarray}\label{step3}
\begin{aligned}
&\frac{d}{dt}\sum_{|\alpha| \leq N-2}\langle \partial^{\alpha}\nabla\times u,\partial^{\alpha}
\nabla\times E\rangle+\lambda\sum_{|\alpha| \leq N-2}\|\partial^{\alpha}\nabla\times E\|^{2}\\
\leq \frac{C}{\epsilon}&\sum_{|\alpha|\leq N}\|\partial^{\alpha}
\nabla u\|^{2}+\epsilon\sum_{1\leq |\alpha|\leq N-2}
\|\partial^{\alpha}\nabla\times B\|^{2}+C \mathcal {E}_{N}(U(t))\mathcal
{D}_{N}(U(t)).
\end{aligned}
\end{eqnarray}
Taking the curl of the second equation of \eqref{2.5}, one has
\begin{equation}
\partial_{t}\nabla\times u+\nabla\times E-
\mu\Delta\nabla\times u=\nabla\times h_{2}.
\end{equation}
Let $0\leq|\alpha|\leq N-2$, applying $\partial^{\alpha}$ to the above equation, multiplying the resultant equation by $\partial^{\alpha}\nabla\times E$, integrating it with respect to $x$ and replacing $\partial_{t} \partial^{\alpha} \nabla \times E$ from the fourth equation of \eqref{2.5}, one has
\begin{equation}\label{step31}
\begin{aligned}
&\frac{d}{dt}\langle \partial^{\alpha} \nabla \times u, \partial^{\alpha} \nabla \times E \rangle + \| \partial^{\alpha} \nabla \times E \|^{2}\\
=&\mu \langle \partial^{\alpha} \Delta \nabla \times u, \partial^{\alpha} \nabla \times E \rangle+\langle \partial^{\alpha} \nabla \times h_{2}, \partial^{\alpha} \nabla \times E \rangle+\langle \partial^{\alpha} \nabla \times u,  \partial^{\alpha} \nabla \times \nabla \times B \rangle\\
 &+\langle \partial^{\alpha} \nabla \times u,  \partial^{\alpha} \nabla \times u \rangle+\langle \partial^{\alpha} \nabla \times u,  \partial^{\alpha} \nabla \times h_{4} \rangle,
\end{aligned}
\end{equation}
which from Cauchy-Schwarz inequality further implies
\begin{eqnarray}\label{ste3ineq}
\begin{aligned}
&\frac{d}{dt}\langle \partial^{\alpha}\nabla\times u,\partial^{\alpha}
\nabla\times E\rangle+\lambda\|\partial^{\alpha}\nabla\times E\|^{2}\\
\leq \frac{C}{\epsilon}&\sum_{|\alpha|\leq N}\|\partial^{\alpha}
\nabla u\|^{2}+\epsilon\sum_{1\leq |\alpha|\leq N-2}
\|\partial^{\alpha}\nabla\times B\|^{2}+C\|\partial^{\alpha}\nabla\times h_{4}\|^{2}+C\|\partial^{\alpha}\nabla\times h_{2}\|^{2},
\end{aligned}
\end{eqnarray}
for an arbitrary constant $0<\epsilon\leq 1$. Here the third term in \eqref{step31} has been estimated on two cases. When $|\alpha|=0$,  it follows from Cauchy-Schwarz inequality that
$$|\langle \partial^{\alpha} \nabla \times u,  \partial^{\alpha} \nabla \times \nabla \times B \rangle |
\leq \frac{C}{\epsilon}\|\nabla u\|^{2}+\epsilon\sum_{|\alpha|=1}
\|\partial^{\alpha}\nabla\times B\|^{2}.$$
When $1\leq |\alpha|\leq N-2$, integrating by parts, one has
\begin{equation*}
\begin{aligned}
\langle \partial^{\alpha} \nabla \times u,  \partial^{\alpha} \nabla \times \nabla \times B \rangle
=&\langle \partial^{\alpha} \nabla\times\nabla \times u,  \partial^{\alpha} \nabla \times B \rangle\\
\leq & \frac{C}{\epsilon}\sum_{|\alpha|\leq N-1}\|\partial^{\alpha}
\nabla u\|^{2}+\epsilon\sum_{1\leq |\alpha|\leq N-2}
\|\partial^{\alpha}\nabla\times B\|^{2}.
\end{aligned}
\end{equation*}
One can estimate $\|\partial^{\alpha}\nabla\times h_{4}\|^{2}+\|\partial^{\alpha}\nabla\times h_{2}\|^{2}$ by using Leibniz formula and Lemma \ref{s5.le1} as follow,
\begin{equation*}
\|\partial^{\alpha}\nabla\times h_{4}\|^{2}+\|\partial^{\alpha}\nabla\times h_{2}\|^{2}\leq
C \mathcal {E}_{N}(U(t))\mathcal
{D}_{N}(U(t)).
\end{equation*}
Then, substituting this into (\ref{ste3ineq}) and taking
the summation over $|\alpha|\leq N-2$ implies (\ref{step3}).

\medskip
\noindent{\bf Step 4.} It holds that
\begin{eqnarray}\label{step4}
\begin{aligned}
&-\frac{d}{dt}\sum_{1\leq |\alpha| \leq N-2} \langle \partial^{\alpha} E, \partial^{\alpha} \nabla \times B\rangle + \lambda\sum_{1\leq |\alpha| \leq N-2}\| \partial^{\alpha} \nabla \times B \|^{2}\\
\leq &\sum_{1\leq |\alpha| \leq N-2}
\|\partial^{\alpha}\nabla\times E\|^{2}+
C\sum_{1\leq |\alpha| \leq N-2}
\|\partial^{\alpha}u\|^{2}+\|\nabla n\|_{N-1}^{2}\|\nabla u\|^{2}_{N-1}.\\
\end{aligned}
\end{eqnarray}
Let $1\leq |\alpha|\leq N-2$, applying $\partial^{\alpha}$ to the third equation in \eqref{2.5}, and take inner product with $-\partial^{\alpha} \nabla \times B$, one has,
\begin{equation*}
\begin{aligned}
&-\frac{d}{dt} \langle \partial^{\alpha} E, \partial^{\alpha} \nabla \times B\rangle + \| \partial^{\alpha} \nabla \times B \|_{L^{2}}^{2} \\
=&-\langle \partial^{\alpha} E, \partial_{t} \partial^{\alpha} \nabla \times B \rangle  - \langle \partial^{\alpha} h_{4}, \partial^{\alpha} \nabla \times  B \rangle- \langle \partial^{\alpha} u, \partial^{\alpha} \nabla \times B \rangle\\
=& \| \partial^{\alpha} \nabla \times E \|^{2}- \langle \partial^{\alpha} h_{4}, \partial^{\alpha} \nabla \times  B \rangle- \langle \partial^{\alpha} u, \partial^{\alpha} \nabla \times B \rangle,
\end{aligned}
\end{equation*}
where the fifth equation in \eqref{2.5} has been used to replace $\partial_{t} \partial^{\alpha} \nabla \times B$. It follows from Cauchy-Schwarz inequality that,
\begin{eqnarray}\label{}
\begin{aligned}
&-\frac{d}{dt} \langle \partial^{\alpha} E, \partial^{\alpha} \nabla \times B\rangle + \lambda\| \partial^{\alpha} \nabla \times B \|^{2}\\
\leq &\sum_{1\leq |\alpha| \leq N-2}
\|\partial^{\alpha}\nabla\times E\|^{2}+
C\sum_{1\leq |\alpha| \leq N-2}
\|\partial^{\alpha}u\|^{2}+\sum_{1\leq |\alpha| \leq N-2}\|\partial^{\alpha}h_{4}\|^{2}\\
\leq &\sum_{1\leq |\alpha| \leq N-2}
\|\partial^{\alpha}\nabla\times E\|^{2}+
C\sum_{1\leq |\alpha| \leq N-2}
\|\partial^{\alpha}u\|^{2}+\|\nabla n\|^{2}_{N-1}\|\nabla u\|^{2}_{N-1}.\\
\end{aligned}
\end{eqnarray}

Then, by putting the above estimates together for
properly chosen constants $0 \leq \kappa _{1},\kappa_{2},\epsilon \ll 1$ , one has

\begin{equation}\label{energy.final}
 \frac{d}{dt}\mathcal {E}_{N}(U(t))+c\mathcal {D}_{N}(U(t)) \leq \left(\mathcal {E}^{\frac{1}{2}}_{N}(U(t))+\mathcal {E}_{N}(U(t))\right)\mathcal {D}_{N}(U(t)).
\end{equation}
The proof of Theorem \ref{th2} is complete.

\end{proof}

\subsection{Large-time behavior}

In this section, we assume that all conditions in Theorem~\ref{th2} still hold and  $U=[\rho,u,\sigma,E,B]$ is the solution to the Cauchy problem $\eqref{2.5}$-$\eqref{NC}$. And now, we are going to study the time decay rate of the full energy $\|U(t)\|_{H^{N}}^{2}$. For this purpose, we define
$$X(t)=\sup_{0\leq s \leq t} (1+s)^{\frac{3}{4}}\mathcal {E}(U(t)),\ \ t \geq 0.$$
Then, we can derive the following Lemma.

\begin{lemma}\label{lemma.Xbounded}
If $\|U_{0}\|_{L^{1}\cap H^{N+1}}$ is sufficiently small, then
\begin{equation}\label{largetime-decay}
 \sup_{t\geq 0}X(t)\leq C \|U_{0}\|_{L^{1}\cap H^{N+1}}^{2}.
\end{equation}
\end{lemma}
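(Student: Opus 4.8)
The plan is to close a time-weighted bootstrap for $X(t)$ by feeding the linear $L^2$ decay rates of Lemma~\ref{lemma.decay3} into Duhamel's formula and then upgrading the resulting low-order decay to the full $H^N$ energy by means of the uniform energy inequality \eqref{energy.ineq} of Theorem~\ref{th2}. To begin, I would recast the reformulated system \eqref{2.5} as $\partial_t U=\mathbb{L}U+H(U)$, where $\mathbb{L}$ is the linear operator generating \eqref{Linear} and $H(U)=[h_1,h_2,h_3,h_4,0]$ collects the source terms \eqref{sec5.ggg}, each of which is at least quadratic in $U$ and its first derivatives. Duhamel's principle then gives
\[
U(t)=e^{t\mathbb{L}}U_0+\int_0^t e^{(t-s)\mathbb{L}}H(U(s))\,ds,
\]
so that the decay of $\|U(t)\|$ can be read off from Lemma~\ref{lemma.decay3} applied to the free evolution and to the forcing.

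For the free part, Lemma~\ref{lemma.decay3} yields $\|e^{t\mathbb{L}}U_0\|\lesssim(1+t)^{-3/8}\|U_0\|_{L^1\cap H^{N+1}}$, the slowest rate being that of $B$; the high-order $L^2$ norms $\|\nabla^3E_0\|$ and $\|\nabla^3B_0\|$ demanded there by the regularity-loss terms are absorbed into $\|U_0\|_{H^{N+1}}$ since $N\ge 4$. For the forcing I would estimate $H$ in $L^1\cap L^2$: as $H$ is quadratic, Lemma~\ref{s5.le1} and Sobolev embedding give $\|H(U(s))\|_{L^1}\lesssim\mathcal{E}_N(U(s))$ and $\|H(U(s))\|_{L^2}\lesssim \mathcal{E}_N(U(s))^{1/2}\mathcal{D}_N(U(s))^{1/2}$. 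Inserting the bound $\mathcal{E}_N(U(s))\le(1+s)^{-3/4}X(t)$ built into the definition of $X$, together with $\int_0^\infty\mathcal{D}_N(U(s))\,ds\lesssim\mathcal{E}_N(U_0)$ from \eqref{energy.ineq} and a Cauchy--Schwarz in time, the forcing integrals reduce to convolutions of the type $\int_0^t(1+t-s)^{-3/8}(1+s)^{-3/4}\,ds\lesssim(1+t)^{-3/8}$. This produces the low-order estimate $\|U(t)\|\lesssim(1+t)^{-3/8}\big(\|U_0\|_{L^1\cap H^{N+1}}+X(t)\big)$.

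To pass from this $L^2$ bound to the full energy $\mathcal{E}_N\sim\|U\|_N^2$, I would return to the differential inequality $\frac{d}{dt}\mathcal{E}_N(U)+c\mathcal{D}_N(U)\le0$ coming from \eqref{PrioriE} for small data. The dissipation $\mathcal{D}_N$ in \eqref{de.D} fails to control only the zero-order norm $\|[u,\sigma,E,B]\|$, the first-order $\|\nabla B\|$, and the top-order electromagnetic derivatives $\|\nabla^N[E,B]\|$; one therefore writes $\mathcal{E}_N\lesssim\mathcal{D}_N+\|[u,\sigma,E,B]\|^2+\|\nabla B\|^2+\|\nabla^N[E,B]\|^2$, bounds the low-order pieces by the previous step, and controls the top-order pieces by interpolation such as $\|\nabla^N E\|\le\|\nabla^{N-1}E\|^{1/2}\|\nabla^{N+1}E\|^{1/2}$, where the first factor is dissipated and the second is bounded through the extra $H^{N+1}$ regularity. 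A Gronwall argument on $(1+t)^{3/4}\mathcal{E}_N(U(t))$ then delivers $\mathcal{E}_N(U(t))\lesssim(1+t)^{-3/4}\big(\|U_0\|_{L^1\cap H^{N+1}}^2+X(t)^2\big)$, and taking the supremum gives $X(t)\lesssim\|U_0\|_{L^1\cap H^{N+1}}^2+X(t)^2$; the smallness of the data lets the continuity argument absorb the quadratic term and yields \eqref{largetime-decay}.

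The main obstacle is the regularity-loss structure stressed in the introduction: the top-order derivatives of $E$ and $B$ are not dissipated and their linear high-frequency decay is recovered only at the cost of one extra derivative, which is exactly why the hypothesis is posed in $L^1\cap H^{N+1}$ and why neither a pure energy method nor pure spectral analysis suffices in isolation. A secondary difficulty is that $u$ and $\sigma$ are not dissipated at zero order (the Navier--Stokes structure), so the zero-order contributions in both the Duhamel integral and the energy splitting must be handled with the sharp $L^1$-$L^2$ rates rather than absorbed into $\mathcal{D}_N$; tracking which convolution exponents stay strictly integrable is where the borderline logarithmic factors appearing in Theorem~\ref{th1} arise.
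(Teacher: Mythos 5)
Your overall architecture --- Duhamel with Lemma~\ref{lemma.decay3} for the zero-order norms, a time-weighted version of the energy inequality, and a bootstrap on $X(t)$ --- is the paper's. The gap is in the single step where you split $\mathcal{E}_N$ against the dissipation. You keep $\mathcal{D}_N$ and propose to control the undissipated top-order electromagnetic derivatives by $\|\nabla^N E\|\le\|\nabla^{N-1}E\|^{1/2}\|\nabla^{N+1}E\|^{1/2}$, with the second factor ``bounded through the extra $H^{N+1}$ regularity.'' But $\|\nabla^{N+1}[E,B]\|$ is only uniformly bounded in time (by $\mathcal{E}_{N+1}(U_0)^{1/2}$); it is not time-integrable --- that is exactly the regularity-loss feature. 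So the best you get is $\|\nabla^N E\|^2\lesssim\|\nabla^{N-1}E\|$, and the corresponding term in the weighted Gronwall inequality becomes, after Cauchy--Schwarz in time,
\[
\int_0^t(1+s)^{-\frac14+\epsilon}\|\nabla^N E\|^2\,ds\lesssim\Bigl(\int_0^t(1+s)^{-\frac12+2\epsilon}ds\Bigr)^{1/2}\Bigl(\int_0^t\mathcal{D}_N(U(s))\,ds\Bigr)^{1/2}\lesssim(1+t)^{\frac14+\epsilon},
\]
which is not $O((1+t)^{\epsilon})$; the inequality then closes only at the rate $\mathcal{E}_N(U(t))\lesssim(1+t)^{-1/2}$, short of the claimed $(1+t)^{-3/4}$. (Using Young's inequality on the product instead is worse: the resulting constant-in-time term $\delta\|\nabla^{N+1}E\|^2$ integrates against $(1+s)^{-1/4+\epsilon}$ to $(1+t)^{3/4+\epsilon}$.)

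The paper's fix is to go one derivative up in the dissipation rather than interpolate: since the data lie in $H^{N+1}$, Theorem~\ref{th2} applies at level $N+1$ and gives $\int_0^\infty\mathcal{D}_{N+1}(U(s))\,ds\le C\mathcal{E}_{N+1}(U_0)$, and one checks directly that
\[
\mathcal{E}_N(U)\lesssim\mathcal{D}_{N+1}(U)+\|[u,\sigma,E,B]\|^2+\|\nabla B\|^2\lesssim\mathcal{D}_{N+1}(U)+\|[u,\sigma,E,B]\|^2,
\]
because $\|\nabla E\|_{N-1}^2$ and $\|\nabla^2 B\|_{N-2}^2$ inside $\mathcal{D}_{N+1}$ already reach order $N$, and $\|\nabla B\|^2\le\|B\|^2+\|\nabla^2B\|^2$. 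The term $\int_0^t(1+s)^{-1/4+\epsilon}\mathcal{D}_{N+1}(U(s))\,ds$ is then bounded by a constant, and the bootstrap closes at the rate $(1+t)^{-3/4}$. This is the one place where the $H^{N+1}$ hypothesis is genuinely consumed. Your Duhamel step and the final continuity argument are otherwise in line with the paper, modulo the minor point that the forcing estimates for $E$ and $B$ also require $\|\nabla^3h_4\|\lesssim\mathcal{E}_N(U)$ (not only $\|h_4\|_{L^1\cap L^2}$) because of the derivative loss in Lemma~\ref{lemma.decay2}.
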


\begin{proof}
Because of the smallness assumption of $\|U_{0}\|_{H^{N}}$, we know from the proof of Theorem~\ref{th2} that
\begin{equation}\label{ltb1}
\frac{d}{dt}\mathcal{E}_{N}(U(t))+c\mathcal{D}_{N}(U(t))
\leq 0
\end{equation}
for any $t\geq 0$. This is the basis by which we can deduce \eqref{largetime-decay}. First, fix a constant $\epsilon >0$ small enough. Then, a time weighted estimate on \eqref{ltb1} gives
\begin{eqnarray}\label{ltb2}
\begin{aligned}
(1+t)^{\frac{3}{4}+\epsilon}&\mathcal{E}(U(t))+c\int_{0}
^{t}(1+s)^{\frac{3}{4}+\epsilon}\mathcal{D}_{N}(U(t))ds\\
&\leq\mathcal{E}_{N}(U(0))+(\frac{3}{4}+\epsilon)\int_{0}^{t}
(1+s)^{-\frac{1}{4}+\epsilon}\mathcal{E}_{N}(U(s))ds.
\end{aligned}
\end{eqnarray}

Now, we are about to deal with the second term of the right-hand part in the above inequality.
Notice that
\begin{eqnarray*}
\begin{aligned}
\mathcal{E}_{N}(U)&\thicksim \|U\|_{H^{N}}^{2}\\
&\leq\mathcal{D}_{N+1}(U)+\|[u,\sigma,E,B]\|_{L^{2}}^{2}+
\| \nabla B\|_{L^{2}}^{2} \\
&\leq C \mathcal{D}_{N+1}(U)+C\|[u,\sigma,E,B]\|_{L^{2}}^{2},
\end{aligned}
\end{eqnarray*}
and
\begin{eqnarray*}
\int_{0}^{t}\mathcal{D}_{N+1}(U(s))ds\leq C\mathcal {E}_{N+1}(U_{0}),\\
\end{eqnarray*}
it follows that
\begin{eqnarray*}
\begin{aligned}
(1+t)^{\frac{3}{4}+\epsilon}&\mathcal{E}_{N}(U(t))+
\int_{0}^{t}(1+s)^{\frac{3}{4}+\epsilon}\mathcal{D}_{N}(U(s))ds\\
&\leq C\mathcal{E}_{N+1}(U_{0})+
C\int_{0}^{t}(1+s)^{-\frac{1}{4}+\epsilon}
\|[u,\sigma,E,B]\|_{L^{2}}^{2}ds.
\end{aligned}
\end{eqnarray*}
Here we have used the following inequality
$$
\| \nabla B\|_{L^{2}}^{2}\leq \| B\|_{L^{2}} \| \nabla^{2} B\|_{L^{2}} \leq \| B\|_{L^{2}}^{2} + \| \nabla^{2} B\|_{L^{2}}^{2}.
$$
Combining Lemma~\ref{lemma.decay3} with Duhamel's principle, one has
\begin{equation*}
\|\sigma\|\lesssim(1+t)^{-\frac{3}{4}}\|[n_{0},
u_{0},\sigma_{0}]\|_{L^{1}\cap L^{2}}+
\int_{0}^{t}(1+t-s)^{-\frac{3}{4}}\|[h_{1},
h_{2},h_{3}]\|_{L^{1}\cap L^{2}}ds.
\end{equation*}
Similarly, it is easy to obtain
\begin{multline*}
\|u\|\lesssim
(1+t)^{-\frac{3}{4}}\|[n_{0},u_{0},\sigma_{0},E_{0}]\|
_{L^{1}\cap L^{2}}+(1+t)^{-\frac{5}{8}}\|B_{0}\|_{L^{1}\cap L^{2}}\\
+\int_{0}^{t}(1+t-s)^{-\frac{3}{4}}\|[h_{1},
h_{2},h_{3},h_{4}]\|_{L^{1}\cap L^{2}}ds,
\end{multline*}
\begin{eqnarray*}
\begin{aligned}
\|E\|\lesssim(1+t)^{-\frac{3}{4}}\|U_{0}\|_{L^{1}\cap H^{3}}+
\int_{0}^{t}(1+t-s)^{-\frac{3}{4}}\|[h_{1},
h_{2},&h_{3},h_{4}]\|_{L^{1}\cap L^{2}}ds\\
&+\int_{0}^{t}(1+t)^{-\frac{3}{4}}\|\nabla^{2}h_{4}\|ds,
\end{aligned}
\end{eqnarray*}
and
\begin{eqnarray*}
  &&\begin{aligned}
\|B\|\lesssim(1+t)^{-\frac{3}{8}}\|[u_{0},E_{0},B_{0}]\|
_{L^{1}\cap H^{3}}+&\int_{0}^{t}(1+t-s)^{-\frac{5}{8}}
\|h_{2}\|_{L^{1}\cap L^{2}}ds\\
&+\int_{0}^{t}(1+t-s)^{-\frac{9}{8}}
(\|h_{4}\|_{L^{1}\cap L^{2}}+\|\nabla^{3}h_{4}\|)ds.
\end{aligned}
\end{eqnarray*}
It is straightforward to verify
\begin{equation*}
\|[h_{1},h_{2},h_{3},h_{4}]\|_{L^{1}\cap L^{2}}
+\|h_{4}\|_{ H^{3}}\leq C\mathcal{E}
_{N}(U).
\end{equation*}
Then, $\|B\|$ is estimated by
\begin{eqnarray*}
  &&\begin{aligned}
\|B\|&\lesssim(1+t)^{-\frac{3}{8}}\|[u_{0},E_{0},B_{0}]\|
_{L^{1}\cap H^{3}}+\int_{0}^{t}(1+t-s)^{-\frac{5}{8}}(1+s)
^{-\frac{3}{4}}ds X(t)\\
&\lesssim(1+t)^{-\frac{3}{8}}(\|[u_{0},E_{0},B_{0}]\|
_{L^{1}\cap H^{3}}+X(t)).
\end{aligned}
\end{eqnarray*}
In a similar way, we can find out that
\begin{equation*}
\|[u,E,B]\|\leq C(1+t)^{-\frac{3}{8}}(\|U_{0}\|
_{L^{1}\cap H^{3}}+X(t)).
\end{equation*}
Thus, one has
\begin{eqnarray}\label{ltb.final1}
\begin{aligned}
(1+t)^{\frac{3}{4}+\epsilon}&\mathcal{E}_{N}(U(t))+
\int_{0}^{t}(1+s)^{\frac{3}{4}+\epsilon}\mathcal{D}_{N}(U(t))ds\\
&\leq C\mathcal{E}_{N+1}(U_{0})+C\int_{0}^{t}
(1+s)^{-\frac{1}{4}+\epsilon}(1+s)^{-\frac{3}{4}}ds
(\|U_{0}\|^2
_{L^{1}\cap H^{3}}+X(t)^2)\\
&\leq C\mathcal{E}_{N+1}(U_{0})+C(1+t)^{\epsilon}
(\|U_{0}\|^{2}_{L^{1}\cap H^{3}}+X(t)^{2}),
\end{aligned}
\end{eqnarray}
which implies that
\begin{equation*}
X(t)\leq C(\|U_{0}\|^{2}_{L^{1}\cap H^{N+1}}+X(t)^{2}).
\end{equation*}
Since $\|U_{0}\|_{L^{1}\cap H^{N+1}}$ is sufficiently small, $X(t)$ is bounded uniformly in time and also \eqref{largetime-decay} holds true. This completes the proof of Lemma \ref{lemma.Xbounded}.

\end{proof}
\medskip

\subsection{Optimal large-time behavior}
In this section  we shall prove the main Theorem \ref{th1} on the
large-time asymptotic behavior of the obtained solutions.

\begin{theorem}\label{th3}
Let $N\geq 4$. We have the following optimal time-decay estimates,
\begin{eqnarray}\label{}
  &&\begin{aligned}
&\|n(t)\|\lesssim (1+t)^{-1}\|U_{0}\|_{L^{1}\cap H^{N+2}},\\
&\|u(t)\|\lesssim 1+t)^{-\frac{5}{8}}\|U_{0}\|_{L^{1}\cap H^{N+2}},\\
&\|\sigma(t)\|\leq C(1+t)^{-\frac{3}{4}}\ln(1+t)\|U_{0}\|_{L^{1}\cap H^{N+2}},\\
&\|E(t)\|\lesssim (1+t)^{-\frac{3}{4}}\ln(1+t)\|U_{0}\|_{L^{1}\cap H^{N+2}},\\
&\|B(t)\|\lesssim (1+t)^{-\frac{3}{8}}\|U_{0}\|_{L^{1}\cap H^{N+2}}.
\end{aligned}
\end{eqnarray}

\end{theorem}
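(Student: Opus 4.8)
The plan is to represent the solution through Duhamel's principle against the linearized semigroup, to insert the pointwise-in-time linear decay rates of Lemma~\ref{lemma.decay3}, and to close everything by a bootstrap resting on the energy decay $\mathcal{E}_N(U(t))\lesssim(1+t)^{-3/4}$ already supplied by Lemma~\ref{lemma.Xbounded}. For each unknown $V\in\{n,u,\sigma,E,B\}$ I would write
$$\|V(t)\|\lesssim \ell_V(t)\,\|U_0\|_{L^1\cap H^{N+2}}+\int_0^t \ell_V(t-s)\,\big\|[h_1,h_2,h_3,h_4](s)\big\|_{L^1\cap L^2}\,ds+(\text{reg.-loss corrections}),$$
where $\ell_V$ is the slowest linear rate attached to $V$ in Lemma~\ref{lemma.decay3} (namely $(1+t)^{-5/4}$ for $n$, $(1+t)^{-3/4}$ for $\sigma$ and $E$, $(1+t)^{-5/8}$ for $u$, $(1+t)^{-3/8}$ for $B$), and the corrections carry the higher norms $\|\nabla^2 E_0\|$, $\|\nabla^3 B_0\|,\dots$ responsible for the regularity-loss structure. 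The hypothesis $U_0\in H^{N+2}$ (two derivatives above the existence threshold of Theorem~\ref{th2}) is precisely what lets me absorb those corrections.

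The heart of the matter is the decay of $\|[h_1,h_2,h_3,h_4]\|_{L^1\cap L^2}$. Since all four sources are quadratic, Lemma~\ref{s5.le1} lets me place one factor in $L^\infty$ (via $H^2\hookrightarrow L^\infty$) and let the other carry the derivatives. The decisive observation is that the Lorentz term $u\times B$ in $h_2$ satisfies $\|u\times B\|_{L^1\cap L^2}\lesssim\|u\|_{L^2}\,\|B\|_{H^2}\lesssim(1+t)^{-1}$, the bottleneck being the slow magnetic rate $(1+t)^{-3/8}$; this single term is what caps $\|n\|$ at $(1+t)^{-1}$ and generates the logarithm in $\sigma$ and $E$. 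All remaining products ($u\cdot\nabla u$, $n\Delta u$, $n\nabla\nabla\cdot u$, $|\nabla u|^2$, $u\cdot\nabla\sigma$, $\nabla\cdot(nu)$, and so on) contain a derivative, and the bare energy bound gives each differentiated factor only the rate $(1+t)^{-3/8}$, leaving them at $(1+t)^{-3/4}$ --- too slow, since the borderline convolution against the $(1+t)^{-3/4}$ kernels would then decay like $(1+t)^{-1/2}$. To cure this I would first upgrade the derivative decay, proving $\|\nabla U(t)\|_{H^{N-1}}\lesssim(1+t)^{-5/8}$ by a time-weighted estimate on the high-order functionals $\mathcal{E}_N^h$ and $\mathcal{D}_N^h$, mirroring Lemma~\ref{lemma.Xbounded} one level higher; with this every product other than $u\times B$ decays at least like $(1+t)^{-5/4}$.

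Given these source rates, the component estimates reduce to the elementary convolution bound: for $a,b>0$,
$$\int_0^t(1+t-s)^{-a}(1+s)^{-b}\,ds\lesssim\begin{cases}(1+t)^{-\min\{a,b\}},&\max\{a,b\}>1,\\(1+t)^{-\min\{a,b\}}\ln(1+t),&\max\{a,b\}=1,\\(1+t)^{1-a-b},&\max\{a,b\}<1.\end{cases}$$
Convolving the source rate $(1+t)^{-1}$ from $u\times B$ against the $n$-kernel $(1+t-s)^{-5/4}$ lands in the first case and yields $\|n\|\lesssim(1+t)^{-1}$; convolving it against the $\sigma$- and $E$-kernels $(1+t-s)^{-3/4}$ hits the borderline case $\max\{a,b\}=1$ and produces exactly $\|[\sigma,E]\|\lesssim(1+t)^{-3/4}\ln(1+t)$; while for $u$ and $B$ every nonlinear convolution is faster than the linear contributions $(1+t)^{-5/8}\|B_0\|$ and $(1+t)^{-3/8}\|B_0\|$, so those two rates are inherited directly from the semigroup. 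Collecting the five weighted norms (together with the weighted first-derivative norm) into a single quantity $\mathcal{Q}(t)$ and using the quadratic structure of the sources then yields $\mathcal{Q}(t)\lesssim\|U_0\|_{L^1\cap H^{N+2}}+\mathcal{Q}(t)^2$, which closes under the smallness hypothesis.

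The main obstacle is the entanglement of the regularity-loss mechanism with the nonlinear coupling. Because $B$ decays only at the slow rate $(1+t)^{-3/8}$, the Lorentz force $u\times B$ cannot be pushed below $(1+t)^{-1}$, and this simultaneously limits $\|n\|$ and, through the critical exponent in the fluid kernels, forces the logarithmic factor on $\sigma$ and $E$. Clearing the remaining differentiated nonlinearities demands the sharpened bound $\|\nabla U\|_{H^{N-1}}\lesssim(1+t)^{-5/8}$, and establishing that is the genuinely delicate step: the high-order dissipation $\mathcal{D}_N^h$ fails to control the top-order magnetic field, so one must trade regularity --- this is where the extra two derivatives on the initial data are spent --- to recover the missing decay.
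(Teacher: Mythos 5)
Your proposal is correct and follows essentially the same route as the paper: Duhamel against the linearized rates of Lemma~\ref{lemma.decay3}, the quadratic sources bounded by $(1+t)^{-3/8}\cdot(1+t)^{-5/8}=(1+t)^{-1}$ (the $B$-factor being the bottleneck), the borderline convolution against the $(1+t)^{-3/4}$ kernels producing the logarithm for $\sigma$ and $E$, and a time-weighted high-order energy bootstrap (the paper's $Y(t)=\sup_s(1+s)^{5/4}\{\|u\|^2+\mathcal{E}_N^h\}$ in Lemmas~\ref{lemma.Eh}--\ref{lemma.Ybounded}) to get $\|[u,\nabla U]\|\lesssim(1+t)^{-5/8}$, with $\|\nabla B\|$ recovered from the linear electromagnetic estimates at the cost of the two extra derivatives. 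The only cosmetic differences are that you merge the paper's sequential $X(t)$-then-$Y(t)$ bootstrap into one quantity $\mathcal{Q}(t)$, and you slightly overclaim $(1+t)^{-5/4}$ for products such as $(n+\sigma)\nabla u$ whose undifferentiated factor only carries the energy rate $(1+t)^{-3/8}$ --- harmless, since $(1+t)^{-1}$ is all that is needed.
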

\medskip

We shall prove the above theorem as follows. The estimate of $B$, in fact, has been obtained in Lemma~\ref{lemma.Xbounded}, the estimate of $u$ will be given in Lemma~\ref{lemma.Ybounded} and the estimate of $n$, $\sigma$ and $E$ will be given in Lemma~\ref{lemma.thelast}.
\begin{lemma}\label{lemma.Eh}
If $\|U_{0}\|_{H^{N}}$ is sufficiently small, then
\begin{equation}\label{oltb.lemma.Eh.0}
\frac{d}{dt}\mathcal{E}_{N}^{h}(U(t))+c
\mathcal{D}_{N}^{h}(U(t))\leq C(\|u\|^{2}+
\mathcal{E}_{N}^{h}(U(t)))\mathcal{E}_{N}^{h}(U(t))
\end{equation}
for any $t\geq 0$.
\end{lemma}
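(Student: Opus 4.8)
The plan is to establish the high-order energy estimate \eqref{oltb.lemma.Eh.0} by repeating the four-step argument of Theorem~\ref{th2}, but now applying the derivative operator $\partial^{\alpha}$ only for $1\leq|\alpha|\leq N$ (rather than $0\leq|\alpha|\leq N$), so that all zero-order terms are excluded and the left-hand side produces the high-order dissipation $\mathcal{D}_{N}^{h}(U(t))$ and high-order energy $\mathcal{E}_{N}^{h}(U(t))$ defined in \eqref{de.Eh} and \eqref{de.Dh}. Concretely, I would first perform the pure energy estimate: apply $\partial^{\alpha}$ with $1\leq|\alpha|\leq N$ to the system \eqref{2.5}, pair $\partial^{\alpha}n,\partial^{\alpha}u,\partial^{\alpha}\sigma,\partial^{\alpha}E,\partial^{\alpha}B$ against the respective equations with the weights $\alpha_{1},1,\alpha_{2}/\alpha_{3},1,1$, and sum. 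This reproduces the structure of \eqref{3.3} but with $\|\nabla[\cdots]\|_{N-1}^{2}$-type quantities, yielding the dissipation $\mu\|\nabla^{2}u\|_{N-1}^{2}+(\mu+\mu')\|\nabla\nabla\cdot u\|_{N-1}^{2}+(\bar{\kappa}\alpha_{2}/\alpha_{3})\|\nabla^{2}\sigma\|_{N-1}^{2}$ on the left.

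The second through fourth steps mirror Steps~2--4 of Theorem~\ref{th2}, but with the index ranges shifted up by one to match the sums in \eqref{de.Eh}: I would estimate $\frac{d}{dt}\sum_{1\leq|\alpha|\leq N-1}\langle\partial^{\alpha}u,\partial^{\alpha}\nabla n\rangle$ to recover the dissipation $\lambda\|\nabla n\|_{N-1}^{2}$; then $\frac{d}{dt}\sum_{1\leq|\alpha|\leq N-2}\langle\partial^{\alpha}\nabla\times u,\partial^{\alpha}\nabla\times E\rangle$ to recover $\lambda\|\nabla\times E\|$ over the appropriate range; and finally $-\frac{d}{dt}\sum_{2\leq|\alpha|\leq N-2}\langle\partial^{\alpha}E,\partial^{\alpha}\nabla\times B\rangle$ to recover the magnetic dissipation $\lambda\|\nabla\times B\|$. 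Taking the linear combination with the same small constants $\kappa_{1},\kappa_{2},\epsilon$ and absorbing the $\epsilon$-terms into the genuinely dissipative terms produces $\frac{d}{dt}\mathcal{E}_{N}^{h}(U(t))+c\mathcal{D}_{N}^{h}(U(t))\leq(\text{nonlinear terms})$, exactly as in \eqref{energy.final}.

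The crucial difference, and the step I expect to be the main obstacle, is the bookkeeping of the nonlinear terms on the right-hand side. In Theorem~\ref{th2} these were bounded by $(\mathcal{E}_{N}^{1/2}+\mathcal{E}_{N})\mathcal{D}_{N}$, but here I must produce the cleaner bound $C(\|u\|^{2}+\mathcal{E}_{N}^{h})\mathcal{E}_{N}^{h}$ appearing in \eqref{oltb.lemma.Eh.0}. The point is that since $1\leq|\alpha|$, every nonlinear contribution now carries at least one spatial derivative, so after the Leibniz expansion and an application of Lemma~\ref{s5.le1} each quadratic term distributes its derivatives as (at least first-order)$\times$(at least first-order), keeping everything controlled by the \emph{high-order} energy $\mathcal{E}_{N}^{h}\sim\|\nabla[n,u,\sigma,E,B]\|_{N-1}^{2}$ rather than the full energy. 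The exception is the term where the top derivative falls on one factor and the other factor is left undifferentiated; here I would use $L^{\infty}$ control of the undifferentiated low-order factor via Sobolev embedding, and the factor $\|u\|^{2}$ is deliberately isolated in \eqref{oltb.lemma.Eh.0} precisely to absorb the single place where the zero-order velocity norm $\|u\|$ must appear (coming from the convection-type terms $u\cdot\nabla(\cdot)$ after integration by parts).

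The remaining routine verifications---that each $\langle\partial^{\alpha}h_{i},\cdot\rangle$ is bounded by $C(\|u\|^{2}+\mathcal{E}_{N}^{h})\mathcal{E}_{N}^{h}$, and that the cross terms generated by integration by parts in the curl estimates (Steps~3--4) are likewise of high-order type since $|\alpha|\geq1$---follow from $N\geq4$ and the calculus inequalities exactly as before, so I would state them and omit the computations. The final assembly then gives \eqref{oltb.lemma.Eh.0} after choosing $\kappa_{1},\kappa_{2},\epsilon$ small and using $\mathcal{E}_{N}^{h}\sim\|\nabla[n,u,\sigma,E,B]\|_{N-1}^{2}$.
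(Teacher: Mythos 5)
Your overall strategy is exactly the paper's: rerun the four steps of Theorem~\ref{th2} with $1\leq|\alpha|$, shift the index ranges to match \eqref{de.Eh}--\eqref{de.Dh}, and close with the same choice of $\kappa_1,\kappa_2,\epsilon$. The one place where your account goes astray is the provenance of the $\|u\|^{2}$ factor. It does not come from the convection terms $u\cdot\nabla(\cdot)$: after integration by parts those leave $\nabla\cdot u$ (or $\nabla u$ in $L^{\infty}$, controlled by $\|\nabla^{2}u\|_{H^{1}}\lesssim\mathcal{D}_{N}^{h}(U)^{1/2}$) against quantities already in $\mathcal{D}_{N}^{h}$, so they are absorbed without any zero-order norm. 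The genuine obstruction is the Lorentz term at $|\alpha|=1$: $\langle u\times\partial^{\alpha}B,\partial^{\alpha}u\rangle$, where $u$ appears undifferentiated and $B$ carries only one derivative. Because of the regularity-loss structure, $\|\nabla B\|^{2}$ lies in $\mathcal{E}_{N}^{h}$ but not in $\mathcal{D}_{N}^{h}$ (the dissipation starts at $\nabla^{2}B$), and $\|u\|$ lies in neither; bounding this term by $\|u\|_{L^{\infty}}\|\nabla B\|\|\nabla u\|$ only yields $\mathcal{E}_{N}^{h}(U)^{3/2}$, which is \emph{not} dominated by $(\|u\|^{2}+\mathcal{E}_{N}^{h})\mathcal{E}_{N}^{h}$ for small energy. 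The paper instead bounds it by $\|u\|_{L^{2}}\|\nabla B\|_{L^{2}}\|\nabla^{2}u\|_{H^{1}}\lesssim\|u\|\,\mathcal{E}_{N}^{h}(U)^{1/2}\mathcal{D}_{N}^{h}(U)^{1/2}$ and applies Cauchy's inequality, which is precisely what forces the $\|u\|^{2}\mathcal{E}_{N}^{h}$ term in \eqref{oltb.lemma.Eh.0}. Without treating this term this way your estimate does not close in the stated form, so you should make this case ($|\alpha|=1$ in the $u\times B$ contribution) explicit rather than folding it into the generic Leibniz bookkeeping.
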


\begin{proof}
The proof can be done by modifying the proof of Theorem
$\ref{th2}$ a little. In fact, by making the energy estimates
on the only  high-order derivatives, then corresponding to
$\eqref{3.3*}$, it can be re-verified that
\begin{equation*}
\begin{aligned}
&\frac{1}{2}\frac{d}{dt}\left(\alpha_{1}\|\nabla n\|_{N-1}^{2}+\|\nabla u\|_{N-1}^{2}+\frac{\alpha_{2}}{\alpha_{3}}\|\nabla\sigma\|_{N-1}^{2}+\|\nabla [E,B]\|_{N-1}^{2}\right)+\mu\|\nabla^2
u\|_{N-1}^{2}+(\mu+\mu')\|\nabla\nabla \cdot
u\|_{N-1}^{2}\\
&+\frac{\bar{\kappa}\alpha_{2}}{\alpha_{3}}\|\nabla^2
\sigma\|_{N-1}^{2} \leq C\mathcal {E}^{h}_{N}(U(t))^{1/2}\mathcal
{D}^{h}_{N}(U(t))+C(\|u\|+\mathcal {E}^{h}_{N}(U(t))^{1/2})\mathcal {E}^{h}_{N}(U(t))^{1/2}\mathcal
{D}^{h}_{N}(U(t))^{1/2},
 \end{aligned}
\end{equation*}
where we modify the estimate for the term containing $B$ by two cases,
\begin{itemize}
 \item when $2\leq |\alpha|\leq N$,
 \begin{equation*}
\arraycolsep=1.5pt
 \begin{aligned}
   \langle \partial^{\alpha}(u \times B),\partial^{\alpha}u\rangle
 =&\langle  \partial^{\alpha} u \times B,\partial^{\alpha}u\rangle+\sum_{\beta<\alpha}C_{\alpha}^{\beta} \langle\partial^{\beta} u\times \partial^{\alpha-\beta}B,\partial^{\alpha}u\rangle\\
= &\sum_{\beta<\alpha}C_{\alpha}^{\beta} \langle\partial^{\beta} u\times \partial^{\alpha-\beta}B,\partial^{\alpha}u\rangle\\
 \lesssim &\left(\|\nabla B\|_{L^{\infty}}\|\nabla^{|\alpha|-1}u\|_{L^{2}} +\|u\|_{L^{\infty}}\|\nabla^{|\alpha|}B\|_{L^{2}}\right)\|\nabla^{|\alpha|}u\|_{L^{2}}\\
\lesssim  &\|\nabla u\|_{N-2}\|\nabla^2 B\|_{N-2}\|\nabla^2u\|_{N-2},
 \end{aligned}
\end{equation*}
\item when $|\alpha|=1$,
\begin{equation*}
\arraycolsep=1.5pt
 \begin{aligned}
   \langle \partial^{\alpha}(u \times B),\partial^{\alpha}u\rangle
 =&\langle  \partial^{\alpha} u \times B,\partial^{\alpha}u\rangle+\langle  u \times \partial^{\alpha} B,\partial^{\alpha}u\rangle \\
= &\langle  u \times \partial^{\alpha} B,\partial^{\alpha}u\rangle\\
 \lesssim & \|u\|_{L^{2}}\|\nabla B\|_{L^{2}} \|\nabla ^2u\|_{H^{1}}.
 \end{aligned}
\end{equation*}
\end{itemize}

Corresponding to $\eqref{step2}$, it can be re-verified that
\begin{eqnarray*}
   \frac{d}{dt}\sum_{1\leq |\alpha|\leq
N-1}\langle
\partial^{\alpha}u,
\partial^{\alpha}\nabla n\rangle
  +\lambda \|\nabla n\|_{N-1}^{2}
 \leq C\|\nabla^2[u,\sigma]\|_{N-1}^{2}+\mathcal {E}^{h}_{N}(U(t))\left(\mathcal
{D}^{h}_{N}(U(t))+\mathcal {E}^{h}_{N}(U(t))\right),
\end{eqnarray*}
where we have used the following estimate,
\begin{equation*}
\|\nabla[h_{1},h_{2}]\|_{H^{N-2}}^{2}\lesssim
\mathcal{E}_{N}^{h}(U(t))[\mathcal{E}_{N}^{h}(U(t))
+\mathcal{D}_{N}^{h}(U(t))].
\end{equation*}

Corresponding to $\eqref{step3}$, it can be re-verified that
\begin{eqnarray*}
\begin{aligned}
&\frac{d}{dt}\sum_{1\leq |\alpha| \leq N-2}\langle \partial^{\alpha}\nabla\times u,\partial^{\alpha}
\nabla\times E\rangle+\lambda\sum_{1\leq|\alpha| \leq N-2}\|\partial^{\alpha}\nabla\times E\|^{2}\\
\leq \frac{C}{\epsilon}&\sum_{1\leq|\alpha|\leq N}\|\partial^{\alpha}
\nabla u\|^{2}+\epsilon\sum_{2\leq |\alpha|\leq N-2}
\|\partial^{\alpha}\nabla\times B\|^{2}+C \mathcal {E}^{h}_{N}(U(t))\left(\mathcal
{D}^{h}_{N}(U(t))+\mathcal {E}^{h}_{N}(U(t))\right),
\end{aligned}
\end{eqnarray*}
where $\langle \partial^{\alpha} \nabla \times u,  \partial^{\alpha} \nabla \times \nabla \times B \rangle$ can be re-verified as follows.
When $|\alpha|=1$,  it follows from Cauchy-Schwarz inequality that
$$|\langle \partial^{\alpha} \nabla \times u,  \partial^{\alpha} \nabla \times \nabla \times B \rangle |
\leq \frac{C}{\epsilon}\|\nabla^2u\|^{2}+\epsilon\sum_{|\alpha|=2}
\|\partial^{\alpha}\nabla\times B\|^{2},$$
when $2\leq |\alpha|\leq N-2$, integrating by parts, one has
\begin{equation*}
\begin{aligned}
\langle \partial^{\alpha} \nabla \times u,  \partial^{\alpha} \nabla \times \nabla \times B \rangle
=&\langle \partial^{\alpha} \nabla\times\nabla \times u,  \partial^{\alpha} \nabla \times B \rangle\\
\leq & \frac{C}{\epsilon}\sum_{1\leq|\alpha|\leq N-1}\|\partial^{\alpha}
\nabla u\|^{2}+\epsilon\sum_{2\leq |\alpha|\leq N-2}
\|\partial^{\alpha}\nabla\times B\|^{2},
\end{aligned}
\end{equation*}
and when $1\leq|\alpha|\leq N-2$, the nonlinear terms can be re-estimated in the following.
\begin{equation*}
\|\partial^{\alpha}\nabla\times h_{4}\|^{2}+C\|\partial^{\alpha}\nabla\times h_{2}\|^{2}\leq
C \mathcal {E}^h_{N}(U(t))(\mathcal
{D}^h_{N}(U(t))+\mathcal {E}^h_{N}(U(t))).
\end{equation*}

Corresponding to $\eqref{step4}$, it can be re-verified that
\begin{eqnarray*}
\begin{aligned}
&-\frac{d}{dt}\sum_{2\leq |\alpha| \leq N-2} \langle \partial^{\alpha} E, \partial^{\alpha} \nabla \times B\rangle + \lambda\sum_{2\leq |\alpha| \leq N-2}\| \partial^{\alpha} \nabla \times B \|^{2}\\
\leq &\sum_{2\leq |\alpha| \leq N-2}
\|\partial^{\alpha}\nabla\times E\|^{2}+
C\sum_{2\leq |\alpha| \leq N-2}
\|\partial^{\alpha}u\|^{2}+\|\nabla n\|_{N-1}^{2}\|\nabla u\|^{2}_{N-1}.\\
\end{aligned}
\end{eqnarray*}

By using the above estimates, it follows that
\begin{eqnarray}\label{oltb.lemma.Eh.1}
\begin{aligned}
&\hspace*{3.5mm}\frac{d}{dt}\mathcal{E}_{N}^{h}(U(t))
+c\mathcal{D}_{N}^{h}(U(t))\\
&\begin{array}{ll}
\lesssim(\|u\|+\mathcal{E}_{N}^{h}(U(t))^{1/2})
\mathcal{E}_{N}^{h}(U(t))^{1/2}
\mathcal{D}_{N}^{h}(U(t))^{1/2}+\mathcal{E}_{N}^{h}(U(t))[\mathcal{E}_{N}^{h}
(U(t))+\mathcal{D}_{N}^{h}(U(t))].
\end{array}
\end{aligned}
\end{eqnarray}
Since $\|U_{0}\|_{H^{N}}$ is sufficiently small and so is $\mathcal{E}_{N}(U(t))$ uniformly for all $t\geq 0$ by \eqref{energy.ineq}, \eqref{oltb.lemma.Eh.0} holds by applying the Cauchy inequality to the first term in \eqref{oltb.lemma.Eh.1}. This completes the proof of Lemma~\ref{lemma.Eh}.

\end{proof}

Furthermore, Let us define
\begin{equation}
Y(t)=\sup_{0\leq s\leq t}{(1+s)^{\frac{5}{4}}\{
\|u(s)\|^{2}+\mathcal{E}_{N}^{h}(U(s))\}},\ \ t\geq 0.
\end{equation}
Similar to the derivation of the uniform-in-time bound of $X(t)$ in Lemma~\ref{lemma.Xbounded}, we have the following result to show the boundedness of $Y(t)$ for all $t\geq 0$ and thus the time decay rate of $\|u\|$.

\begin{lemma}\label{lemma.Ybounded}
If $\|U_{0}\|_{L^{1}\cap H^{N+2}}$ is sufficiently small, then
\begin{equation}
\sup_{t\geq 0}Y(t)\leq C\|U_{0}\|_{L^{1}\cap H^{N+2}}^{2}.
\end{equation}
\end{lemma}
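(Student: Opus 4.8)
The plan is to follow the scheme of Lemma~\ref{lemma.Xbounded}, but now to propagate the two ingredients of $Y(t)$ — the zeroth-order velocity $\|u\|^2$ and the high-order energy $\mathcal{E}_N^h$ — in parallel, showing that each decays like $(1+t)^{-5/4}$, and then to close the resulting self-referential inequality $Y(t)\leq C(\|U_0\|_{L^1\cap H^{N+2}}^2+X(t)^2+Y(t)^2)$ by smallness. Throughout I would use that, by Theorem~\ref{th2} and Lemma~\ref{lemma.Xbounded}, $\mathcal{E}_N(U(t))$ is uniformly small and satisfies $\mathcal{E}_N(U(t))\lesssim(1+t)^{-3/4}X(t)$, so that the individual components obey the decay rates of Lemmas~\ref{lemma.decay2}--\ref{lemma.decay3}; in particular $\|B\|\lesssim(1+t)^{-3/8}$ is the slowest.

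For the velocity I would apply Duhamel's principle to the linear estimate for $u$ in Lemma~\ref{lemma.decay3}, whose slowest linear rate $(1+t)^{-5/8}$ comes from the $B_0$-channel of the electromagnetic part. The decisive observation is that the magnetic equation carries no nonlinear source, so in the Duhamel formula the forcing is convolved only against the faster $(1+t-s)^{-3/4}$ kernel, never against $(1+t-s)^{-5/8}$. Estimating the quadratic terms through their product structure — the borderline being $\|u\times B\|_{L^1}\lesssim\|u\|\,\|B\|\lesssim(1+s)^{-5/8}(1+s)^{-3/8}=(1+s)^{-1}$ — one gets
$$\int_0^t(1+t-s)^{-3/4}(1+s)^{-1}\,ds\lesssim(1+t)^{-3/4}\ln(1+t),$$
which is dominated by the linear rate. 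Hence $\|u(t)\|^2\lesssim(1+t)^{-5/4}$, with the quadratic remainder controlled by $X(t)^2+Y(t)^2$. The same computation, applied to the first-order linear estimates, gives $\|\nabla[u,\sigma,E,B]\|^2\lesssim(1+t)^{-5/4}$, the borderline here being $\|\nabla B\|^2$.

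For the high-order energy I would start from Lemma~\ref{lemma.Eh}; since $\mathcal{E}_N^h$ is uniformly small the cubic term is absorbed, and after multiplying by $(1+t)^{5/4+\epsilon}$ and integrating, the estimate reduces to controlling the leftover $\int_0^t(1+s)^{1/4+\epsilon}\mathcal{E}_N^h\,ds$ plus a higher-order remainder that is $\lesssim Y(t)^2$. Using the interpolation $\mathcal{E}_N^h\lesssim\mathcal{D}_{N+1}^h+\|\nabla[u,\sigma,E,B]\|^2$ (after $\|\nabla^2B\|^2\leq\|\nabla B\|^2+\|\nabla^3B\|^2$, whose last term is a summand of $\mathcal{D}_{N+1}^h$), the low-order part is integrable against the growing weight precisely because of the $(1+s)^{-5/4}$ decay established in the previous step, contributing only a harmless factor $(1+t)^\epsilon$.

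The main obstacle — and the reason the regularity threshold is raised from $H^{N+1}$ to $H^{N+2}$ — is the remaining term $\int_0^t(1+s)^{1/4+\epsilon}\mathcal{D}_{N+1}^h\,ds$: unlike the $(-1/4+\epsilon)$ weight in Lemma~\ref{lemma.Xbounded}, the weight $1/4+\epsilon$ here is \emph{growing}, so the regularity-loss part $\|\nabla^N E\|^2+\|\nabla^N B\|^2$ of the dissipation can no longer be integrated freely via $\int\mathcal{D}_{N+1}^h\,ds\lesssim\mathcal{E}_{N+1}(U_0)$. I would resolve this by a second, nested time-weighted energy estimate run one level higher: at level $N+1$ with weight $(1+t)^{1/4+\epsilon}$ the leftover weight becomes $-3/4+\epsilon<0$, hence bounded, so that $\int_0^t(1+s)^{1/4+\epsilon}\mathcal{D}_{N+1}^h\,ds\lesssim\mathcal{E}_{N+2}^h(U_0)+(\text{lower order})\lesssim\|U_0\|_{H^{N+2}}^2$, the nesting terminating after exactly one step — which is what the extra derivative buys. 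Collecting the velocity and high-order estimates yields $Y(t)\leq C(\|U_0\|_{L^1\cap H^{N+2}}^2+X(t)^2+Y(t)^2)$, and the smallness of $\|U_0\|_{L^1\cap H^{N+2}}$ (which also bounds $X(t)$ through Lemma~\ref{lemma.Xbounded}) closes the estimate.
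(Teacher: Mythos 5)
Your proposal is correct and follows essentially the same route as the paper: a $(1+t)^{5/4+\epsilon}$-weighted estimate on the high-order energy inequality of Lemma~\ref{lemma.Eh}, splitting the leftover $\int_0^t(1+s)^{1/4+\epsilon}\mathcal{E}_N^h\,ds$ into a higher-level dissipation integral (absorbed by one extra derivative of the data, whence $H^{N+2}$) plus the regularity-loss remainder $\|\nabla B\|^2$, which is controlled by Duhamel at the rate $(1+t)^{-5/4}$, together with a Duhamel estimate giving $\|u\|\lesssim(1+t)^{-5/8}$ and closure by smallness. The only (inconsequential) deviations are your slightly different splitting of $\mathcal{E}_N^h$ and your explicit nested weighted estimate at level $N+1$, where the paper simply bounds $(1+s)^{1/4+\epsilon}\le(1+s)^{3/4+\epsilon}$ and reuses the level-$(N+1)$ analogue of its inequality \eqref{ltb.final1}.
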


\begin{proof}
For a further estimate for the solutions, we shall use the high-order energy inequality \eqref{oltb.lemma.Eh.0}. Fix $\epsilon>0$ small enough. The time-weighted estimate on \eqref{oltb.lemma.Eh.0} gives
\begin{eqnarray}\label{lemma.Ybounded.main}
\begin{aligned}
&(1+t)^{\frac{5}{4}+\epsilon}\mathcal{E}_{N}^{h}(U(t))
+\int_{0}^{t}(1+s)^{\frac{5}{4}+\epsilon}
\mathcal{D}_{N}^{h}(U(s))ds\\
&\hspace*{6mm}\begin{array}{ll}
\lesssim&\displaystyle\mathcal{E}_{N}(U_{0})+\int_{0}^{t}
(1+s)^{\frac{5}{4}+\epsilon}[\|u(s)\|^{2}+\mathcal{E}
_{N}^{h}(U(s))]\mathcal{E}
_{N}^{h}(U(s))ds\\
&\displaystyle+\int_{0}^{t}(1+s)^{\frac{1}{4}+\epsilon}
\mathcal{E}_{N}^{h}(U(s))ds.
\end{array}
\end{aligned}
\end{eqnarray}
The second term of the right-hand side is bounded by $Y(t)^{2}$ since
\begin{equation*}
\int_{0}^{t}(1+s)^{\frac{5}{4}+\epsilon}(1+s)^{-\frac
{5}{4}-\frac{5}{4}}ds\leq C\\
\end{equation*}
for $\epsilon>0$ small enough. Now we only need to deal with the last term on the right-hand side of \eqref{lemma.Ybounded.main}. It can be estimated as follows,
\begin{eqnarray*}
\begin{aligned}
&\int_{0}^{t}(1+s)^{\frac{1}{4}+\epsilon}\mathcal{E}
_{N}^{h}(U(s))ds\\
\leq & \int_{0}^{t}(1+s)^{\frac{1}{4}+
\epsilon}(\mathcal{D}_{N+1}(U(s))+\|\nabla B(s)\|
^{2})ds\\
\leq &\int_{0}^{t}(1+s)^{\frac{3}{4}+\epsilon}\mathcal{D}
_{N+1}(U(s))ds+\int_{0}^{t}(1+s)^{\frac{1}{4}+
\epsilon}\|\nabla B(s)\|
^{2}ds\\
\lesssim & (1+t)^{\epsilon}\|U_{0}\|
_{L^{1}\cap H^{N+2}}^{2}+\int_{0}^{t}(1+s)^
{\frac{1}{4}+\epsilon}\|\nabla B(s)\|^{2}ds,
\end{aligned}
\end{eqnarray*}
where  \eqref{ltb.final1} has been used. From Lemma~\ref{lemma.decay2}, we can estimate $\|\nabla B\|$ as
\begin{eqnarray*}
\begin{aligned}
\begin{array}{ll}
\|\nabla B\|\lesssim & (1+t)^{-\frac{7}{8}}(\|u_{0}\|_{L^{1}\cap L^{2}}+\|\nabla u_0\|_{L^{2}})+(1+t)^{-\frac{11}{8}}(\|E_{0}\displaystyle\|_{L^{1}\cap L^{2}}+\|\nabla^{4}E_{0}\|_{L^{2}})\\[2mm]
 &\displaystyle+(1+t)^{-\frac{5}{8}}(\|B_{0}\|_{L^{1}\cap L^{2}}+\|\nabla^{3}B_{0}\|)\\[2mm]
&\displaystyle+\int_{0}^{t}(1+t-s)^{-\frac{7}{8}}
(\|h_{2}(s)\|_{L^{1}\cap L^{2}}+\|\nabla h_{2}(s)\|)ds\\[2mm]
&\displaystyle+\int_{0}^{t}(1+t-s)^{-\frac{11}{8}}
(\|h_{3}(s)\|_{L^{1}\cap L^{2}}+\|\nabla^{4}h_{3}(s)\|)ds.
\end{array}
\end{aligned}
\end{eqnarray*}
Since
\begin{eqnarray*}
\begin{aligned}
\begin{array}{ll}
\|h_{2}(t)\|&\displaystyle_{L^{1}\cap L^{2}}+\|\nabla h_{2}(t)\|+\|h_{3}(t)\|_{L^{1}\cap L^{2}}+\|\nabla^{4}h_{3}(t)\|\\[3mm]
&\displaystyle\lesssim\|U_{0}\|_{L^{1}\cap H^{N+1}}(1+t)^{-\frac{3}{8}}
Y(t)^{1/2}(1+t)^{-\frac{5}{8}}\\[3mm]
&\displaystyle\lesssim\|U_{0}\|_{L^{1}\cap H^{N+1}}(1+t)^{-1}Y(t)^{1/2},
\end{array}
\end{aligned}
\end{eqnarray*}
it follows
\begin{eqnarray*}
&&\begin{aligned}
&\begin{array}{ll}
\|\nabla B\|\lesssim(1+t)^{-\frac{5}{8}}
&\displaystyle\|U_{0}\|_{L^{1}\cap H^{4}}\\[2mm]
&\displaystyle +\int_{0}^{t}(1+t-s)^{-\frac{7}{8}}
(\|h_{2}(s)\|_{L^{1}\cap L^{2}}+\|\nabla h_{2}(s)\|ds\\
&\displaystyle +\int_{0}^{t}(1+t-s)^{-\frac{11}{8}}
(\|h_{4}(s)\|_{L^{1}\cap L^{2}}+\|\nabla^{4} h_{4}(s)\|ds\\[2mm]
\end{array}\\
&\begin{array}{ll}
\hspace*{1cm}\lesssim(1+t)^{-\frac{5}{8}}&\displaystyle\|U_{0}\|
_{L^{1}\cap H^{4}}\\[2mm]
&\displaystyle +\int_{0}^{t}(1+t-s)^{-\frac{7}{8}}
\|U_{0}\|_{L^{1}\cap H^{N+1}}(1+s)^{-1}Y(t)^{1/2}ds\\
&\displaystyle +\int_{0}^{t}(1+t-s)^{-\frac{11}{8}}
\|U_{0}\|_{L^{1}\cap H^{N+1}}(1+s)^{-1}Y(t)^{1/2}ds\\[2mm]
\end{array}\\
&\hspace*{1cm}\lesssim(1+t)^{-\frac{5}{8}}\|U_{0}\|
_{L^{1}\cap H^{N+1}}(1+Y(t))^{1/2}.\\
\end{aligned}
\end{eqnarray*}

Combining the estimates above, one has
\begin{eqnarray*}
&&\begin{aligned}
(1+t)^{\frac{5}{4}+\epsilon}&\mathcal{E}_{N}^{h}(U(t))
+\int_{0}^{t}(1+s)^{\frac{5}{4}+\epsilon}\mathcal{D}
(U(t))ds\\[2mm]
&\begin{array}{ll}
\lesssim (1+t)^{\epsilon}\|U_{0}\|_{L^{1}\cap H^{N+2}}^{2}+&\displaystyle Y(t)^{2}\\[2mm]
&\displaystyle +\int_{0}^{t}(1+s)^{\frac{1}{4}+
\epsilon}(1+s)^{-\frac{5}{4}}\|U_{0}\|^2
_{L^{1}\cap H^{N+2}}(1+Y(s))ds\\
\end{array}\\[2mm]
&\lesssim (1+t)^{\epsilon}\|U_{0}\|_{L^{1}\cap H^{N+2}}^{2}(1+Y(s))+Y(t)^{2},
\end{aligned}
\end{eqnarray*}
which implies
\begin{equation}\label{oltb.lemma.Ybounded.1}
\sup_{0\leq s\leq t}(1+s)^{\frac{5}{4}}
\mathcal{E}_{N}^{h}(U(s))
\lesssim \|U_{0}\|_{L^{1}\cap H^{N+1}}^{2}
(1+Y(t))+Y(t)^{2}.
\end{equation}

To complete the proof, we still have to estimate the optimal time-decay of $\|u\|$.
From Lemma~\ref{lemma.decay3}, we have
\begin{eqnarray*}
&&\begin{aligned}
\|u\|&\lesssim (1+t)^{-\frac{5}{8}}\|U_{0}\|_{L^{1}
\cap L^{2}}+\int_{0}^{t}(1+t-s)^{-\frac{3}{4}}
\|[h_{1},h_{2},h_{3},h_{4}]\|_{L^{1}\cap L^{2}}ds\\[2mm]
&\lesssim (1+t)^{-\frac{5}{8}}\|U_{0}\|_{L^{1}
\cap L^{2}}+(1+t)^{-\frac{5}{8}}\|U_{0}\|_{L^{1}
\cap H^{N+1}}Y(t)^{1/2},
\end{aligned}
\end{eqnarray*}
that is,
\begin{equation}\label{oltb.lemma.Ybounded.2}
\sup_{0\leq s\leq t}(1+s)^{\frac{5}{4}}\|u(s)\|^{2}
\lesssim \|U_{0}\|_{L^{1}\cap H^{N+1}}^{2}Y(t)+\|U_{0}\|^2_{L^{1}
\cap L^{2}}.
\end{equation}
Combining \eqref{oltb.lemma.Ybounded.1} with \eqref{oltb.lemma.Ybounded.2}, one has
\begin{equation*}
Y(t)\leq C\|U_{0}\|_{L^{1}\cap H^{N+2}}^{2}(1+
Y(t))+Y(t)^{2}.\\
\end{equation*}
Therefore, we've completed the proof by some continuity argument since $\|U_{0}\|_{L^{1}\cap H^{N+2}}$ is sufficiently small.
\medskip
\end{proof}

The last problem we have to deal with is to get the time decay estimates of $n$, $\sigma$ and $E$. Actually, we have

\begin{lemma}\label{lemma.thelast}
If $\|U_{0}\|_{L^{1}\cap H^{N+2}}$ is sufficiently small, then
\begin{eqnarray}
&&\begin{aligned}
&\|n(t)\|\leq C\|U_{0}\|_{L^{1}\cap H^{N+2}}(1+t)^{-1},\\
&\|[\sigma(t),E(t)]\|\leq C\|U_{0}\|_{L^{1}\cap H^{N+2}}
(1+t)^{-\frac{3}{4}}\ln(1+t).
\end{aligned}
\end{eqnarray}
for any $t\geq 0$.
\end{lemma}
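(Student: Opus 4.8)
The plan is to run the same Duhamel-plus-linear-decay scheme already used in Lemmas \ref{lemma.Xbounded} and \ref{lemma.Ybounded}, now feeding in the decay rates that those two lemmas have established. Writing the solution of \eqref{2.5} as the linear semigroup of \eqref{Linear} acting on $U_0$ plus the Duhamel convolution against the source $[h_1,h_2,h_3,h_4]$, and invoking the component-wise linear estimates of Lemma \ref{lemma.decay3}, I would obtain for each of $n$, $\sigma$, $E$ an inequality of the schematic form (linear initial-data term) $+\int_0^t(1+t-s)^{-a}\,\Phi(s)\,ds$. Here $a=\frac54$ for $n$ (with $h_4$ entering only through the $L^1$ part of the $E$-slot, as dictated by the $\|E_0\|_{L^1}$ coefficient in Lemma \ref{lemma.decay3}), while $a=\frac34$ for $\sigma$ (fed only by $[h_1,h_2,h_3]$) and for $E$; for $E$ one must additionally carry the regularity-loss contribution $\int_0^t(1+t-s)^{-3/4}\|\nabla^2 h_4(s)\|\,ds$, exactly as in the $E$-estimate already derived inside the proof of Lemma \ref{lemma.Xbounded}.

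The second step is to bound the nonlinear source norms. I would use $\sup_s(1+s)^{3/4}\mathcal{E}_N(U(s))<\infty$ and $\sup_s(1+s)^{5/4}\{\|u(s)\|^2+\mathcal{E}_N^h(U(s))\}<\infty$ from Lemmas \ref{lemma.Xbounded} and \ref{lemma.Ybounded}, that is $\|U(s)\|\lesssim(1+s)^{-3/8}$ for the undifferentiated low-order part, while $\|u(s)\|$ and $\|\nabla U(s)\|_{N-1}$ decay like $(1+s)^{-5/8}$. Since every term of $h_1,h_2,h_3,h_4$ is at least quadratic, Lemma \ref{s5.le1} together with Sobolev embedding turns each product into (one low-order factor)$\times$(one factor carrying a derivative or the fast-decaying velocity); the borderline contributions, namely $(n+\sigma)\nabla n$, $u\times B$ and $nu$, each pair a $(1+s)^{-3/8}$ factor with a $(1+s)^{-5/8}$ factor, giving $\|[h_1,h_2,h_3,h_4]\|_{L^1\cap L^2}\lesssim(1+s)^{-1}\|U_0\|^2_{L^1\cap H^{N+2}}$ up to bounded powers of $X,Y$. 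The higher-order piece $\|\nabla^2 h_4\|$ pairs two $(1+s)^{-5/8}$ factors and hence decays like $(1+s)^{-5/4}$, which is harmless and is precisely where the two extra derivatives in the $H^{N+2}$ data are consumed.

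The final step is to evaluate the convolutions $\int_0^t(1+t-s)^{-a}(1+s)^{-1}\,ds$ by splitting at $s=t/2$. For $a=\frac54>1$ the tail $s\in[t/2,t]$ dominates and yields the clean rate $(1+t)^{-1}$, with the near-diagonal piece only contributing $(1+t)^{-5/4}\ln(1+t)\lesssim(1+t)^{-1}$; this matches the claim for $n$ with no logarithm. For $a=\frac34<1$ the near-diagonal piece $s\in[0,t/2]$ produces $(1+t)^{-3/4}\int_0^{t/2}(1+s)^{-1}\,ds\lesssim(1+t)^{-3/4}\ln(1+t)$, while the tail gives only $(1+t)^{-3/4}$; hence $\sigma$ and $E$ decay like $(1+t)^{-3/4}\ln(1+t)$, as asserted. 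I expect the only genuinely delicate point to be the appearance of the logarithm: it is forced by the critical balance between the linear rate $(1+t)^{-3/4}$ and the quadratic-source rate $(1+s)^{-1}$ in the $\sigma$- and $E$-convolutions, whereas the faster linear rate $(1+t)^{-5/4}$ for $n$ just barely removes it. A secondary point requiring care is that the $L^1\cap L^2$ bookkeeping of the sources, and the regularity-loss term for $E$, must close purely on the uniform bounds for $X(t)$ and $Y(t)$, so that no self-referential feedback inequality in $\|n\|$, $\|\sigma\|$, or $\|E\|$ is needed.
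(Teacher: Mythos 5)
Your proposal follows essentially the same route as the paper's proof: Duhamel's principle with the linearized decay rates of Lemma \ref{lemma.decay3}, the source bounds $\|[h_1,h_2,h_3,h_4]\|_{L^1\cap L^2}\lesssim(1+s)^{-1}$ obtained from the uniform bounds on $X(t)$ and $Y(t)$, and the convolution estimates that produce the logarithm exactly at the critical exponent $a=\tfrac34$ while the exponent $a=\tfrac54$ for $n$ avoids it. Your explicit splitting of the convolution at $s=t/2$ and your identification of which quadratic pairings are borderline are correct and merely make explicit what the paper leaves implicit.
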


\begin{proof}
For $\|n(t)\|$,
\begin{equation*}
\|n(t)\|\lesssim(1+t)^{-\frac{5}{4}}\|U_{0}\|_{L^{1}\cap L^{2}}+\int_{0}^{t}(1+t-s)^{-\frac{5}{4}}\|[h_{1},h_{2},h_{3},h_{4}]\|_{L^{1}\cap L^{2}}ds\\
\end{equation*}
It is straightforward to verify that
\begin{equation*}
\begin{aligned}
\|[h_{1},h_{2},h_{3},h_{4}]\|_{L^{1}\cap L^{2}}
\lesssim &(1+t)^{-\frac{3}{8}} \|U_{0}\|_{L^{1}\cap H^{N+2}}(1+t)^{-\frac{5}{8}}Y(t)^{1/2}\\
\lesssim & (1+t)^{-1} \|U_{0}\|_{L^{1}\cap H^{N+2}}^{2}.
\end{aligned}
\end{equation*}
Thus,
\begin{equation*}
\|n(t)\|\leq C\|U_{0}\|_{L^{1}\cap H^{N+2}}(1+t)^{-1}.
\end{equation*}

As for $E(t)$, we know
\begin{eqnarray*}
\begin{aligned}
\|E(t)\|\lesssim (1+t)^{-\frac{3}{4}}\|U_{0}\|_{L^{1}\cap H^{3}}
+\int_{0}^{t}(1+t-s)^{-\frac{3}{4}}
&\|[h_{1},h_{2},h_{3},h_{4}]\|_{L^{1}\cap L^{2}}ds+\int_{0}^{t}(1+t-s)^{-\frac{3}{4}}\|\nabla^{4}
h_{4}\|ds.
\end{aligned}
\end{eqnarray*}
Obviously, one has
\begin{equation*}
\|[h_{1},h_{2},h_{3},h_{4}]\|_{L^{1}\cap L^{2}}
\lesssim (1+t)^{-1} \|U_{0}\|_{L^{1}\cap H^{N+2}}^{2},
\end{equation*}
and
\begin{equation*}
\|\nabla^{2}h_{4}\|\lesssim (1+t)^{-\frac{5}{4}} \|U_{0}\|_{L^{1}\cap H^{N+2}}^{2},
\end{equation*}
which implies that
\begin{equation*}
\int_{0}^{t}(1+t-s)^{-\frac{3}{4}}
\|[h_{1},h_{2},h_{3},h_{4}]\|_{L^{1}\cap L^{2}}ds
\leq (1+t)^{-\frac{3}{4}}\ln (1+t)\|U_{0}\|^2_
{L^{1}\cap H^{N+2}}.
\end{equation*}
It is easy to see
\begin{equation*}
\|E(t)\|\leq C\|U_{0}\|_{L^{1}\cap H^{N+2}}
(1+t)^{-\frac{3}{4}}\ln(1+t).
\end{equation*}

As for $\sigma(t)$, we know that
\begin{eqnarray*}
&&\begin{aligned}
\|\sigma(t)\|\lesssim (1+t)^{-\frac{3}{4}}\|[n_{0},u_{0},\sigma_{0}]\|_{L^{1}\cap L^{2}}
+\int_{0}^{t}(1+t-s)^{-\frac{3}{4}}
&\|[h_{1},h_{2},h_{3}]\|_{L^{1}\cap L^{2}}ds,
\end{aligned}
\end{eqnarray*}
Similar to the estimate on $E(t)$, one has
\begin{equation*}
\|\sigma(t)\|\leq C\|U_{0}\|_{L^{1}\cap H^{N+2}}
(1+t)^{-\frac{3}{4}}\ln(1+t).
\end{equation*}

This completes the proof.
\medskip
\end{proof}
\medskip
\noindent{\bf Acknowledgements:}\ \  The authors would like to thank Professor Changjiang Zhu for his continuous help. QQL was supported by the National Natural Science
Foundation of China (No. 11501217) and the Fundamental Research Funds for the Central Universities (No. 2015ZM188).

\medskip

\end{document}